\title{First-fit coloring on interval graphs has performance ratio at least 5}
\author{H.~A.~Kierstead$^\MakeLowercase{a}$}
\thanks{$^a$School of Mathematical and Statistical Sciences, Arizona State
University, Tempe, AZ 85287, USA. E-mail address: hal.kierstead@me.com.
Research was supported in part by NSF grant DMS-0901520.}
\author{David A.~Smith$^\MakeLowercase{b}$}
\thanks{$^b$School of Mathematical and Statistical Sciences, Arizona State
University, Tempe, AZ 85287, USA. E-mail address: 
David.A.Smith.1@asu.edu.
%dsmith@mathpost.asu.edu.
Research was supported in part by NSF grant DMS-0901520.}
\author{W.~T.~Trotter$^\MakeLowercase{c}$}
\thanks{$^c$School of Mathematics, Georgia Institute of Technology, Atlanta, GA 30332, USA. E-mail address: trotter@math.gatech.edu.}
\thanks{Copyright 2015, Elsevier. Licensed under the Creative Commons Attribution-NonCommercial-NoDerivatives 4.0 International, http://creativecommons.org/licenses/by-nc-nd/4.0/}
\newtheorem{thm}{Theorem}
\newtheorem{prop}[thm]{Proposition}
\newtheorem{lemma}[thm]{Lemma}
\theoremstyle{definition}
\newtheorem*{defn}{Definition}
\theoremstyle{remark}
\newtheorem*{dasnote}{Note}
\begin{document}
\begin{abstract}
First-fit is the online graph coloring algorithm that  
considers vertices one at a time in some order and assigns each  
vertex the least positive integer not used already on a neighbor.
The maximum number of colors used by first-fit on graph $G$ 
over all vertex orders is denoted $\chi_{FF}(G).$

The exact value of $R:=\sup_G\frac{\chi_{FF}(G)}{\omega(G)}$ over interval 
graphs $G$ is unknown.
Pemmaraju, Raman, and Varadarajan (2004) proved
$R \le 10$, and this can be  
improved to $8.$ 
Witsenhausen (1976) and
Chrobak and \'Slusarek (1988) showed $R\ge 4$, and \'Slusarek (1993) 
improved this to $4.45.$ We prove $R\ge 5.$

\end{abstract}

\maketitle

{\bf Key words:}
graph coloring, 
online algorithm, 
first-fit, 
interval graph.

\section{Introduction}\label{intro}

A \emph{coloring} of graph $G=(V,E)$ is a function $f\colon V \to \mathbb{Z}$ where $f(u)\ne f(v)$ for all $uv\in E$.
We consider only finite graphs.
The \emph{clique size} of graph $G$ is the number $\omega(G)$ 
of vertices in a largest complete subgraph of $G.$ 
Every coloring of 
graph $G$ has at least $\omega(G)$ 
colors.

An \emph{interval} is a convex set of real numbers.
Graph $G=(V,E)$ is an \emph{interval graph} if there is a function that assigns to each $v\in V$
an interval $I_v$ so that 
\[E = \left\{uv\in \binom{V}{2} \middle\vert I_u\cap I_v\ne \emptyset\right\}.\]
Such a function $v\mapsto I_v$ is an \emph{interval representation.}
Every interval
graph $G$ has a coloring of just $\omega(G)$ colors. One can be constructed by
ordering vertices by interval left end and coloring by first fit.

In some applications, however, we begin assigning colors before the whole 
graph is seen.
A coloring algorithm is \emph{online} if, given graph $G$ on vertices
$v_1,\ldots,v_n$, it assigns (irrevocably)
for each $k\in\{1,\ldots,n\}$ a color to $v_k$ that
depends only on the subgraph of $G$ induced by seen vertices 
$\{v_j\mid 1\le j\le k\}.$
For example, the \emph{first-fit} algorithm produces coloring $f$ as follows: 
for $k$ from 1 to $n$, let
$f(v_k)$ be the least positive integer available, i.e., not a member of 
\[\{f(v_j) \mid 1\le j \le k\text{ and } v_jv_k \text{ is an edge}\}.\]
The coloring produced by this algorithm depends on vertex order.
For example, Figures~\ref{f01}
\psfrag{v1}{$v_1$}
\psfrag{v2}{$v_2$}
\psfrag{v3}{$v_3$}
\psfrag{v4}{$v_4$}
\psfrag{v5}{$v_5$}
\psfrag{v6}{$v_6$}
\psfrag{v7}{$v_7$}
\psfrag{v8}{$v_8$}
\psfrag{n0}{$0$}
\psfrag{n-1}{$-1$}
\psfrag{n-2}{$-2$}
\psfrag{n1}{$1$}
\psfrag{n2}{$2$}
\psfrag{n3}{$3$}
\psfrag{n4}{$4$}
\psfrag{n5}{$5$}
\psfrag{n6}{$6$}
\psfrag{n7}{$7$}
\psfrag{n8}{$8$}
\psfrag{n9}{$9$}
\psfrag{t1}{{\tiny $1$}}
\psfrag{t2}{{\tiny $2$}}
\psfrag{t3}{{\tiny $3$}}
\psfrag{t4}{{\tiny $4$}}
\begin{figure}
\begin{center}\includegraphics{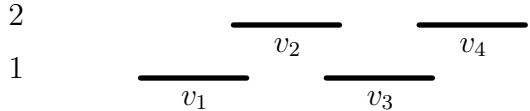}\end{center}
\caption{First-fit uses 2 colors on $P_4=v_1v_2v_3v_4$}\label{f01}
\end{figure}
and \ref{f02}
\begin{figure}
\begin{center}\includegraphics{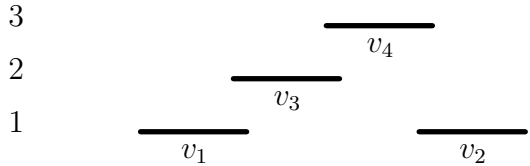}\end{center}
\caption{First-fit uses 3 colors on $P_4=v_1v_3v_4v_2$ }\label{f02}
\end{figure}
show first-fit colorings of the 4-vertex path.

First-fit may be the first algorithm that comes to mind when the goal is
to use few colors.
How wasteful is it in the worst case?
Let $\chi_{FF}(G)$ be the maximum 
over all vertex orders of the number of colors it uses on graph $G$.
For example, $\chi_{FF}(P_4)=3.$

Kierstead \cite{HK} showed that $\chi_{FF}(G) \le 40\omega(G)$ for every 
interval graph $G$. 
This was improved to $25.72\omega(G)$ by Kierstead and Qin \cite{KQ},
then $10\omega(G)$ by Pemmaraju, Raman, and Varadarajan \cite{PRV}.
Brightwell, Kierstead, and Trotter (unpublished)
and Narayanaswamy and Subhash Babu \cite{NSB} observed that the technique in \cite{PRV} yields $8\omega(G)$. 
These results provide bounds above 
the worst-case performance ratio of the first-fit algorithm on interval graphs,
\[R:= \sup\left\{\frac{\chi_{FF}(G)}{\omega(G)} \middle\vert G\text{ is a nonempty interval graph}\right\}.\]
So $1\le R \le 8.$ 
We want to know the exact value of $R$. 
This motivates a search for lower bounds.

Already $1.5 \le R$ due to the example $P_4.$ 
Variations on this example yield improvements, and
this is the structure of the present paper:
we recapitulate results of earlier authors in Section~\ref{s2}, 
generalize their constructions in Section~\ref{s3}, and tune the new construction to get $5\le R$
in Section~\ref{rseq}.
In Section~\ref{nocap} we show that this lower bound is the best that our approach allows.
A few more historical notes on related problems appear in Chapter 1 of the  
dissertation of Smith \cite{Smith}.
\section{Walls and caps}\label{s2}

Figure~\ref{f02}
is a certificate of a bound below $R.$
Pertinent features include total number of colors, clique size, and the set of colors neighboring each vertex, especially colors below its own.
Compare this to a brick wall with bricks arranged in courses. We may be concerned with its height, with its density, and, for stability, that each brick be supported by something below.

Gy\'arf\'as and Lehel \cite{GL} defined a wall to be essentially a graph and vertex order.
A linear order is overprecise for our purpose.
More suitable is a
partial order.
For example, the first-fit algorithm
uses 3 colors on the 4-vertex path
whenever the path's end vertices precede its inner 
vertices.

Colors can be determined during the building of a wall.
Suppose for all $k$ that the levels of the neighbors of each vertex in level $k$ include $1,\ldots,k-1$, but not $k$ (as in Figures~\ref{f01}, \ref{f02}, and \ref{f03}).
When we present such a wall to the first-fit algorithm from bottom to top,
the algorithm gives color 1 to the first level, color 2 to the second level, and so on.
First-fit colors are determined by this structural condition known as support.
We get large bounds below $R$ from tall walls with small cliques and supported vertices.

\begin{defn}
Let $N(v)$ denote the set of neighbors of vertex $v$, and $N[v] := N(v) \cup \{v\}.$
When $f$ is a function on domain $V$, and $U\subseteq V,$ we denote the image 
of $U$ by $f(U).$
Pair $(G,f)$ is an \emph{$r$-wall} if:
\begin{enumerate}
\item $G$ is an interval graph on vertex set $V$
with a given interval representation;
\item\label{wc2} $f\colon V \to \{1,2,\ldots\}$ is a coloring of $G$;
\item\label{wc3} $f(N[v]) \supseteq \{1,\ldots,f(v)\}$ for all $v\in V$ (\emph{support}); and
\item $|f(V)|\ge r\omega(G).$
\end{enumerate}
\end{defn}
For example, 
Figure~\ref{f01} depicts a $1$-wall.
Figure~\ref{f02} depicts a $1.5$-wall.
Figure~\ref{f03} depicts a $2$-wall.
We may refer to an $r$-wall as a \emph{wall}.
A wall is \emph{nonempty} when $V$ is nonempty.
The \emph{clique size} of a wall is the clique size of its graph.
A \emph{wall on $t$ colors} is a wall with $|f(V)|=t.$
\begin{prop}
If a nonempty $r$-wall exists, then $r \le R.$
\end{prop}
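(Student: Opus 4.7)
The plan is to show that for a nonempty $r$-wall $(G,f)$, presenting the vertices to first-fit in an order that sorts by increasing $f$-value forces the algorithm to reproduce the coloring $f$ exactly. Since $G$ is an interval graph by the definition of wall, the resulting inequality $\chi_{FF}(G)\ge|f(V)|\ge r\omega(G)$ will yield $r\le R$ after dividing by $\omega(G)$, where nonemptiness guarantees $\omega(G)\ge 1$.

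First I would fix an enumeration $v_1,\ldots,v_n$ of $V$ with $f(v_i)\le f(v_j)$ whenever $i<j$; ties are broken arbitrarily, and tied vertices are automatically non-adjacent because $f$ is a proper coloring. I would then argue by induction on $k$ that first-fit assigns color $f(v_k)$ to $v_k$. At step $k$, the neighbors of $v_k$ already processed are exactly the $u\in N(v_k)$ with $f(u)<f(v_k)$, and by the inductive hypothesis each such $u$ carries color $f(u)$. The support condition gives $\{1,\ldots,f(v_k)\}\subseteq f(N[v_k])$, and since $f(v_k)\notin f(N(v_k))$ (again by propriety) we obtain
\[\{f(u):u\in N(v_k),\ f(u)<f(v_k)\}=\{1,\ldots,f(v_k)-1\}.\]
The least positive integer missing from this set is $f(v_k)$, closing the induction.

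It then follows that first-fit uses exactly $|f(V)|$ colors on $G$ under this order, whence $\chi_{FF}(G)\ge|f(V)|\ge r\omega(G)$ and $r\le\chi_{FF}(G)/\omega(G)\le R$. I do not expect a real obstacle: the proposition is essentially what makes the wall abstraction useful, and the only point needing care is observing that both the chosen vertex order and the propriety of $f$ are required so that the support hypothesis (phrased in terms of $f(N[v])$) transfers correctly to the set of colors first-fit actually sees at $v_k$, namely those on already-processed neighbors.
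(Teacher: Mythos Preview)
Your proof is correct and follows the same approach as the paper: order vertices by increasing $f$-value, observe that first-fit then reproduces $f$, and conclude $\chi_{FF}(G)\ge |f(V)|\ge r\omega(G)$. You simply spell out in detail the induction that the paper leaves implicit, and you correctly note the role of propriety in ensuring that all neighbors already processed at step $k$ are exactly those of smaller $f$-value.
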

\begin{proof}
Given $r$-wall $(G,f)$ where $V$ is the vertex set of $G$, 
let $v_1,\ldots,v_n$ be an enumeration of $V$ that increases in color, 
i.e., $f(v_i)\le f(v_j)$ whenever $1\le i < j \le n.$ 
Then the first-fit algorithm (applied to this order) assigns color $f(v)$ 
to $v$ for each $v\in V.$ 
In particular, $\chi_{FF}(G) \ge f(v_n)\ge r\omega(G).$
\end{proof}
So $2\le R$ by Figure~\ref{f03}.
\begin{figure}
\begin{center}\includegraphics{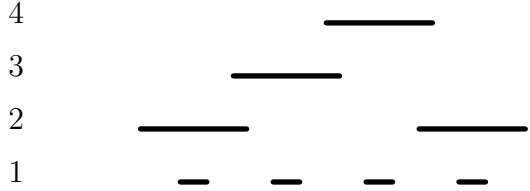}\end{center}
\caption{First-fit uses 4 colors on this interval graph of clique size 2 when vertices are ordered bottom to top, so $R \ge \frac{4}{2}.$}\label{f03}
\end{figure}

A given interval graph has many interval representations.
For example, if the transformation $x \mapsto x + 0.1$ is applied to each interval of a representation,
the graph structure is unchanged. The same is true of $x \mapsto -\sqrt{2} x$ and of every invertible affine transformation.
If we use intervals of finite length in our representations, we can move them by affine transformation
into any given interval of positive length 
while preserving graph structure.
This is what we mean by \emph{squeezing} an interval representation into an interval.

Consider the sequence $T_0,T_1,T_2,\ldots$ of walls depicted in 
Figure~\ref{f04}.
$T_0$ is a 1-vertex wall.
$T_1$ is the wall of Figure~\ref{f03}.
$T_2$ is the wall of Figure~\ref{f04_5}.
For $i > 0,$ wall $T_i$ is made as in Figure~\ref{f04}
by placing 4 new vertices in levels above 4 copies of $T_{i-1}$.
The copies, whose tops are represented in Figure~\ref{f04} by wedge-like cones, have their interval representations squeezed into narrow intervals away from the junctions of the intervals of the 
4 new top vertices, so that the clique size of the graph
of $T_i$ is just 1 greater than that of the graph of $T_{i-1}.$
Wall $T_i$ has $3i+1$ levels, and its graph has clique size $i+1$, so $3\le R.$
\psfrag{w}{$T_{i-1}$}
\psfrag{W}{$T_i:$}
\begin{figure}
\begin{center}\includegraphics{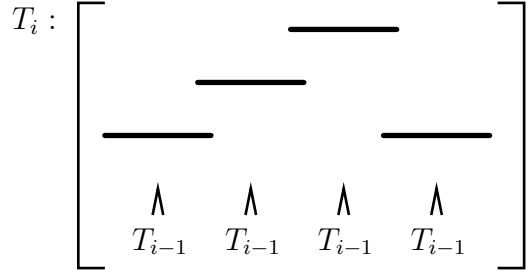}\end{center}
\caption{Wall $T_i$ is made by placing 4 intervals above 4 narrow copies of $T_{i-1}$. The construction yields $R\ge \frac{1}{1}, \frac{4}{2}, \frac{7}{3},\cdots \to 3.$}\label{f04}
\end{figure}
We generalize this arrangement of new vertices above previously constructed 
walls (as in Figure~\ref{f04})
by defining a wall-like object called a cap.
\begin{figure}
\begin{center}\includegraphics{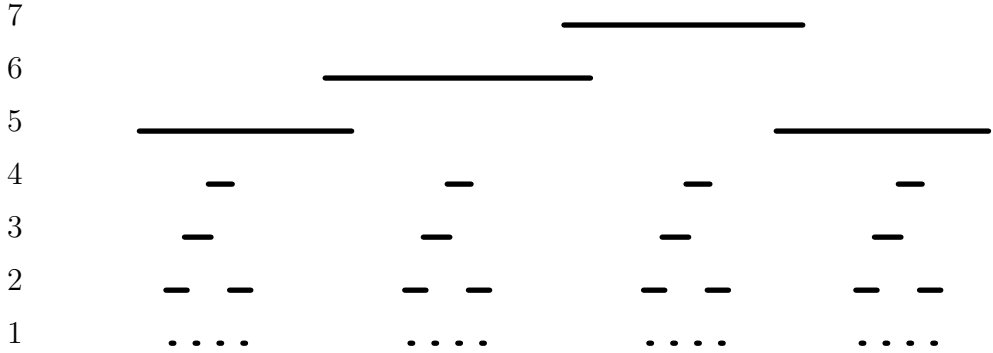}\end{center}
\caption{First-fit uses 7 colors on this interval graph of clique size 3 when vertices are ordered bottom to top, so $R \ge \frac{7}{3}.$ This is wall $T_2.$}\label{f04_5}
\end{figure}

\begin{defn}
Let $G$ be a (finite) nonempty interval graph on vertex set $V$ with interval 
representation $v \mapsto I_v$.
Let $f\colon V\to \{0,-1,-2,\ldots\}$ be a coloring of $G$.
(Caps are built from the top down. 
It may not be clear at first how many colors are needed.
So we use 0 for the top level and build the cap downward using 
negative integers.)
The remaining conditions for caps (yet to be defined precisely) are illustrated in 
Figure~\ref{f05}. For each vertex $v$:
\begin{enumerate}
\item Empty space is reserved for a previously constructed wall $W_v$.
This void is bounded horizontally by some interval $J_v\subseteq I_v.$ 
Vertically the void begins at some level $c_v$ below $f(v)$ and extends downward without bound.
\item Few cap vertices $u$ in levels above $c_v$ have intervals $I_u$ that meet $J_v.$
\item Each level between $f(v)$ and $c_v$ contains a (supporting) cap vertex that neighbors $v$.
\end{enumerate}
Here caps are defined precisely.
Let $r\ge 1.$
Suppose for each $v\in V$ there is an interval $J_v\subseteq I_v$ 
of positive length
(reserving horizontal space for a wall)
where $(\forall u,v\in V)[ J_u\cap J_v \ne \emptyset \to J_u=J_v].$
We denote the colors of vertices $u$ whose intervals $I_u$ meet $J_v$ by $C_v.$ That is,
\[C_v := f\left(\{u\in V \mid I_u\cap J_v \ne \emptyset\}\right).\]
\psfrag{Iv}{$I_v$}
\psfrag{Fv}{$f(v)$}
\psfrag{Iu}{$I_u$}
\psfrag{Fu}{$f(u)$}
\psfrag{Jv}{$J_v$}
\psfrag{Cv}{$c_v$}
\psfrag{zero}{$0$}
\psfrag{Wv}{space for $W_v$}
\psfrag{captop}{cap top}
\psfrag{conetop}{cone top}
\psfrag{SPA}{{\bf spa}}
\psfrag{RSE}{{\bf rse}}
\psfrag{EMPTY}{{\bf empty}}
\begin{figure}
\begin{center}\includegraphics{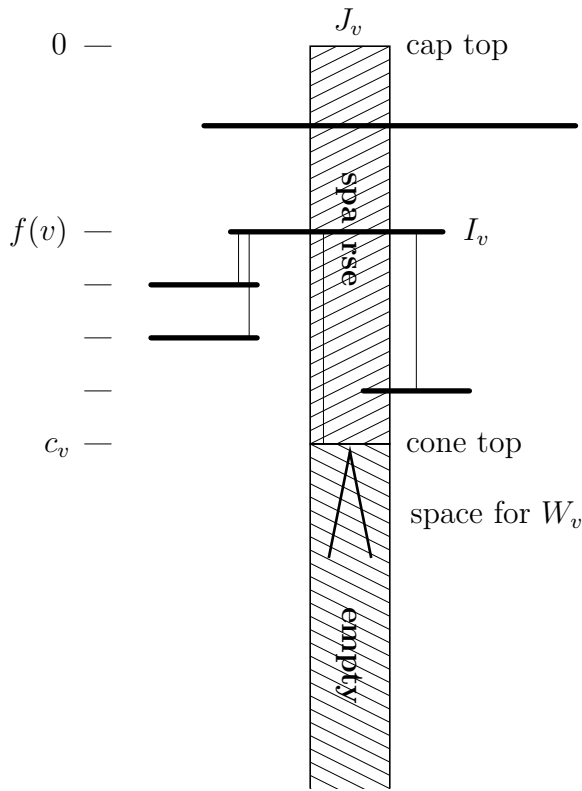}\end{center}
\caption{Detail of a cap at vertex $v$.
A cap is used to build a sequence of walls that tends to $r$ in the ratio of 
number of colors to clique size of the graph.
Space is reserved for a wall $W_v$ whose vertices will support $v$ when the cap is extended to a wall.
A wedge-shaped cone at the top of the void indicates where the top of $W_v$ will go
(but does not indicate how tall $W_v$ is).
Vertex $v$ is supported by cap vertices in levels from its own ($f(v)$) down to the top of the void ($c_v$).
Above the void is a sparse region.
At most $1/r$ of the levels from cap top to cone top have a vertex $u$ where $I_u$ meets $J_v.$
}\label{f05}
\end{figure}
If for each $v\in V$ there is a color $c_v\in \{-1, -2, \ldots\}$
where:
\[\begin{array}{lcll}
|C_v \cap (-\infty, c_v]| &=& 0 & \text{(\emph{emptiness});}\\
|C_v\cap (c_v, 0]| &\le& -c_v/r & \text{(\emph{sparseness});}\\
\mathbb{Z} \cap (c_v,f(v)] &\subseteq&f(N[v]) &\text{(\emph{support});}
\end{array}\]
and if $f(v)=0$ for some $v\in V$,
then $(G,f, v\mapsto I_v, v\mapsto J_v, v\mapsto c_v)$
is an \emph{$r$-cap}.
\end{defn}

\begin{prop}\label{tcap}
Let $r\ge 1.$
If an $r$-cap exists, then $r\le R$; specifically, there is a constant
$b$ so that for all $k\in\{0,1,\ldots\},$
there is a wall of clique size $k$ with at least $rk-b$ colors.
\end{prop}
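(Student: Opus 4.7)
The plan is to construct a sequence $W_0, W_1, W_2, \ldots$ of walls by iterating the $r$-cap: each $W_{n+1}$ is formed by taking a fresh copy of the given $r$-cap and plugging previously constructed walls into its reserved voids, so that the color-to-clique ratios approach $r$.

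Starting from a trivial one-vertex $W_0$, I would form $W_{n+1}$ from $W_n$ (with $t_n$ colors and clique size $\omega_n$) as follows. Take a fresh copy of the $r$-cap, squeeze a copy of $W_n$ into each equivalence class $J^{(i)}$ of the relation $J_u = J_v$, and shift the cap's colors upward by a uniform $D_n$ chosen so that the sub-walls occupy colors $\{1,\ldots,t_n\}$ below while the cap occupies shifted colors above, with its bottom support level meeting the sub-wall top. Verifying that $W_{n+1}$ is a wall reduces to two points. For \emph{validity of the coloring}, any cap vertex $v$ whose interval meets some $J^{(i)}$ satisfies $f(v) \in C^{(i)}$, so emptiness yields $f(v) > c^{(i)}$, and the shift then pushes $f(v)+D_n$ safely above the sub-wall palette. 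For \emph{support}, the sub-wall in $J_v$ supplies $\{1,\ldots,t_n\}$ while the cap supplies $\{c_v+D_n+1,\ldots,f(v)+D_n\}$; these together cover $\{1,\ldots,f(v)+D_n\}$ provided that $c_v+D_n \le t_n$, which is exactly what the choice of $D_n$ arranges.

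Counting is the heart of the argument. Writing $p = |f(V)|$ for the number of cap colors, we have $t_{n+1} = t_n + p$. At a point $x \in J^{(i)}$ the clique of $W_{n+1}$ consists of at most $\omega_n$ sub-wall vertices together with at most $|C^{(i)}|$ cap vertices (those with intervals meeting $J^{(i)}$); sparseness gives $|C^{(i)}| \le -c^{(i)}/r \le p/r$, so $\omega_{n+1} \le \omega_n + p/r$ once the cap's own (constant) clique size is no longer the binding term. Hence
\[t_{n+1} - r\,\omega_{n+1} \;\ge\; (t_n+p) - r\,(\omega_n + p/r) \;=\; t_n - r\,\omega_n,\]
and iterating bounds $t_n - r\,\omega_n$ below by some $-b$ depending only on the cap. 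Since $\omega_n\to\infty$, walls of any prescribed large clique size $k$ arise (taking $n$ so $\omega_n \ge k$ and trimming, or tuning the base case), each with at least $rk-b$ colors; small $k$ are absorbed by enlarging $b$.

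The main obstacle is the simultaneous color-matching: the identity $c_v+D_n = t_n$ that makes support fit must hold for every cap vertex $v$, yet $c_v$ may vary within a single equivalence class $J^{(i)}$. I would handle this by normalizing the cap first, or by using sub-walls of slightly differing heights in different $J^{(i)}$'s (padding the extra levels by a small tower of singleton vertices); either route adds only a bounded, cap-dependent overhead that is absorbed into $b$ without disturbing the ratio argument above.
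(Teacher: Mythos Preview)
Your iterative scheme has the right spirit, but the fix you sketch in the last paragraph does not close the gap, and the gap is fatal to the bound you claim. Suppose you pad the sub-wall in class $J^{(i)}$ by a clique of height $c^{(i)}-c_{\min}$ (where $c_{\min}=\min_v c_v$ and $c^{(i)}=\max\{c_v:J_v=J^{(i)}\}$) so that supports line up after the uniform shift $D_n=t_n-c_{\min}$. Then at a point of $J^{(i)}$ the clique of $W_{n+1}$ is at most $\omega_n+(c^{(i)}-c_{\min})+|C^{(i)}|$, and with sparseness $|C^{(i)}|\le -c^{(i)}/r$ the worst class gives
\[
t_{n+1}-r\,\omega_{n+1}\ \ge\ (t_n-r\,\omega_n)-(r-1)(c_{\max}-c_{\min}).
\]
Unless every cone has the same depth, you lose a fixed positive amount \emph{at every iteration}; since $\omega_n$ grows linearly in $n$, the accumulated loss is $\Theta(k)$ and your walls carry only about $(r-\varepsilon)k$ colors for some cap-dependent $\varepsilon>0$, not $rk-b$. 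The alternative fix of ``normalizing the cap'' by resetting each $c_v$ to $c_{\min}$ also fails: emptiness and sparseness survive, but the cap's own support condition $\mathbb{Z}\cap(c_v,f(v)]\subseteq f(N[v])$ does not extend down to $c_{\min}$, so the renormalized object is no longer a cap. (Separately, your displayed inequality uses $|C^{(i)}|\le p/r$; sparseness only gives $|C^{(i)}|\le -c^{(i)}/r$, and $-c^{(i)}\le p$ is not part of the cap axioms.)

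The paper's proof avoids this by not iterating with a single sub-wall. It argues by induction on the target clique size $k$: for each cap vertex $v$ it invokes the inductive hypothesis to obtain a wall $W_v$ of clique size exactly $k-\lfloor -c_v/r\rfloor$, squeezes $W_v$ into $J_v$, and recolors it top-down starting at color $c_v$. Because the inserted wall's clique size is tailored to the local cone depth, sparseness yields total clique at most $k$ with no slack lost, and every such $W_v$ reaches down to a common color $b-rk+1$ with $b=r\max_v(-c_v)$ fixed once and for all. The idea you are missing is precisely this: different voids must receive differently sized walls, and that requires the full strength of the inductive hypothesis rather than a single previously built $W_n$.
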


Before the proof, here are some lemmas.

\begin{lemma}\label{cliquewall}
For $k \ge 0$ there is a wall on $k$ colors with clique size $k.$
\end{lemma}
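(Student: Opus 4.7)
The plan is to exhibit for each $k$ an explicit interval graph that is a clique, color its vertices with all integers from $1$ to $k$, and then verify the four wall conditions directly. The construction is essentially the opposite extreme of the examples in Figures~\ref{f01}--\ref{f04_5}: instead of building height from disjointness, I will pile everything into a single clique, so $\omega(G)$ grows at the same rate as the number of colors.

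The case $k=0$ I would dispatch immediately by taking the empty wall $(G,f)$ with $V=\emptyset$. Every universally quantified wall condition holds vacuously, and both $|f(V)|$ and $\omega(G)$ are $0$, so $|f(V)|\ge r\omega(G)$ is satisfied for any $r\ge 1$.

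For $k\ge 1$ I would take $k$ pairwise intersecting intervals, for instance $I_j=[0,j]$ for $j=1,\ldots,k$, and call the corresponding vertices $v_1,\ldots,v_k$. The resulting graph $G$ is $K_k$, so $\omega(G)=k$. Define $f(v_j)=j$. Since the $k$ colors are distinct and $G$ is complete, $f$ is a proper coloring. For support, note $N[v_j]=V$, so $f(N[v_j])=\{1,\ldots,k\}\supseteq\{1,\ldots,j\}$ for every $j$. Finally $|f(V)|=k=\omega(G)$, so condition (4) of the wall definition is met with $r=1$. Hence $(G,f)$ is a $1$-wall on $k$ colors whose graph has clique size $k$.

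No step here is obstructive; the only thing to be careful about is to keep the $k=0$ degenerate case consistent with the definition (which it is, once one checks that the quantifiers in the support and coloring conditions are vacuous). The whole point of the lemma is to serve as a trivial base case for the more substantive arguments that follow.
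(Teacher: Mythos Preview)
Your proof is correct and follows essentially the same approach as the paper: build $K_k$ with pairwise intersecting intervals and color its vertices $1,\ldots,k$. The paper's version is a one-liner using the constant representation $v\mapsto[0,1]$ rather than your $I_j=[0,j]$, but the content is identical.
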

\begin{proof}
Consider a wall on $k$ vertices with constant interval representation $v\mapsto [0,1]$.
\end{proof}

\begin{lemma}\label{walldrop}
If there is a wall on $t > 0$ colors whose graph has no $k$-clique,
then there is a wall on $t-1$ colors whose graph has no $k$-clique.
\end{lemma}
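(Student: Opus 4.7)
The plan is to delete every vertex of the top color. Let $(G,f)$ be the given wall on $t>0$ colors with no $k$-clique, on vertex set $V$. I first note that in any wall the set of colors used must be downward closed in $\{1,2,\ldots\}$: if $f(v)=c$, then by support $\{1,\ldots,c\}\subseteq f(N[v])\subseteq f(V)$. Combined with $|f(V)|=t$, this forces $f(V)=\{1,\ldots,t\}$, so in particular $V_t:=f^{-1}(t)$ is nonempty.

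Next, set $V':=V\setminus V_t$, let $G'$ be the induced subgraph of $G$ on $V'$ with the inherited interval representation, and let $f':=f|_{V'}$. I then check that $(G',f')$ is a wall on $t-1$ colors with no $k$-clique. The interval-graph property and the absence of a $k$-clique descend to induced subgraphs. For support at a vertex $v\in V'$: since $f(v)\le t-1$, every vertex of $N_G[v]$ needed to realize the colors $1,\ldots,f(v)$ has color strictly below $t$ and therefore survives the deletion, so $f'(N_{G'}[v])\supseteq\{1,\ldots,f'(v)\}$. That $|f'(V')|=t-1$ follows by applying support in $(G,f)$ to any $w\in V_t$: the colors $1,\ldots,t-1$ are realized on neighbors of $w$, and those neighbors lie in $V'$.

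Finally, to certify $(G',f')$ as a wall I need it to be an $r$-wall for some $r$; any nonnegative $r$ with $r\le (t-1)/\omega(G')$ works when $G'$ is nonempty, and when $V'=\emptyset$ (which occurs exactly in the degenerate case $t=1$) the empty wall does the job, with no $k$-clique for every $k\ge 1$. There is no real obstacle in this argument; the one point to watch is that the deleted vertices sit only at the top level, so the support inequalities for every vertex left behind are untouched by the deletion.
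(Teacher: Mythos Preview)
Your proof is correct and follows exactly the approach the paper takes: delete all vertices of the highest color. The paper's proof is a one-line sketch (``Delete all vertices of the highest color. Alternatively, delete all vertices of the lowest color and renumber colors.''), and you have simply filled in the routine verifications that support survives and that the color set drops to $\{1,\ldots,t-1\}$.
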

\begin{proof}
Delete all vertices of the highest color.
Alternatively, delete all vertices of the lowest color and renumber colors.
\end{proof}

\begin{proof}[Proof of Proposition~\ref{tcap}]
Let $\displaystyle c = \max_{v\in V}-c_v$ (the depth of the deepest cone top).
Let $b= rc.$
Argue by induction on $k$.
When
$0 \le k < c$,
Lemma~\ref{cliquewall} provides a wall.
This concludes the base case.

Now suppose $k\ge c$ for the inductive step.
Let $v$ be a cap vertex.
Note that $f(v)\in C_v$.
By the emptiness condition, $f(v) \in (c_v, 0].$
By the sparseness condition, $-c_v/r \ge |C_v \cap (c_v, 0]| \ge 1.$
And $-c_v/r \le -c_v \le c \le k.$
So $0 \le k - \lfloor -c_v/r\rfloor < k,$ and we may invoke the inductive hypothesis to obtain a wall $W_v$ of clique size 
$k-\lfloor -c_v/r\rfloor$ with at least
$r(k-\lfloor -c_v/r\rfloor)-b\ge rk+c_v-b$ colors.
We squeeze the interval representation of the graph of wall $W_v$ into interval $J_v.$ 
By the sparseness condition, this addition creates no clique on more than $k$ vertices.

In the cap, $v$ has neighbors of colors $f(v) - 1, f(v) - 2, \ldots, c_v + 1$.
We extend $f$ to $W_v$ by resuming the sequence of colors. We paused at $c_v+1$. We
continue by recoloring $W_v$ from its top down with colors $c_v, c_v - 1, c_v - 2, \ldots.$
The sequence lasts until levels of $W_v$ are exhausted.
The sequence extends to $c_v -(rk + c_v - b) + 1 = b - rk + 1$ or farther.
Note that $b - rk + 1$ does not depend on $v$.

After this is done for all cap vertices, each vertex has wall support down to color $b - rk + 1,$
and some particular cap vertex $v$ with $f(v) = 0$ has neighbors colored $-1, -2, \ldots, b - rk + 1.$
Vertices beyond (that is, of color at most $b - rk$) are dropped as in the proof of Lemma~\ref{walldrop}.
Now the colors used are $0, -1, -2, \ldots, b - rk + 1.$
Walls are supposed to have positive colors, so we translate the palette, adding $rk - b$ to each color used.
The result is a wall of clique size
at most $k$ with at least $rk-b$ colors.
We may increase the clique size to $k$ by placing a wall of Lemma~\ref{cliquewall} on the side.
\end{proof}

Witsenhausen \cite{Wits} proved $4\le R,$ as did
Chrobak and \'Slusarek \cite{CS}, who drew a picture much like
the 4-cap of Figure~\ref{f07}.
\psfrag{ta}{{\tiny (a)}}
\psfrag{tb}{{\tiny (b)}}
\psfrag{tc}{{\tiny (c)}}
\psfrag{td}{{\tiny (d)}}
\psfrag{te}{{\tiny (e)}}
\psfrag{tf}{{\tiny (f)}}
\psfrag{tg}{{\tiny (g)}}
\psfrag{th}{{\tiny (h)}}
\psfrag{ti}{{\tiny (i)}}
\psfrag{tj}{{\tiny (j)}}
\psfrag{tk}{{\tiny (k)}}
\psfrag{tl}{{\tiny (l)}}
\psfrag{tm}{{\tiny (m)}}
\psfrag{tn}{{\tiny (n).}}
\psfrag{t8}{{\tiny $8$}}
\psfrag{t12}{{\tiny $12$}}
\psfrag{t16}{{\tiny $16$}}
\begin{figure}
\begin{center}\includegraphics{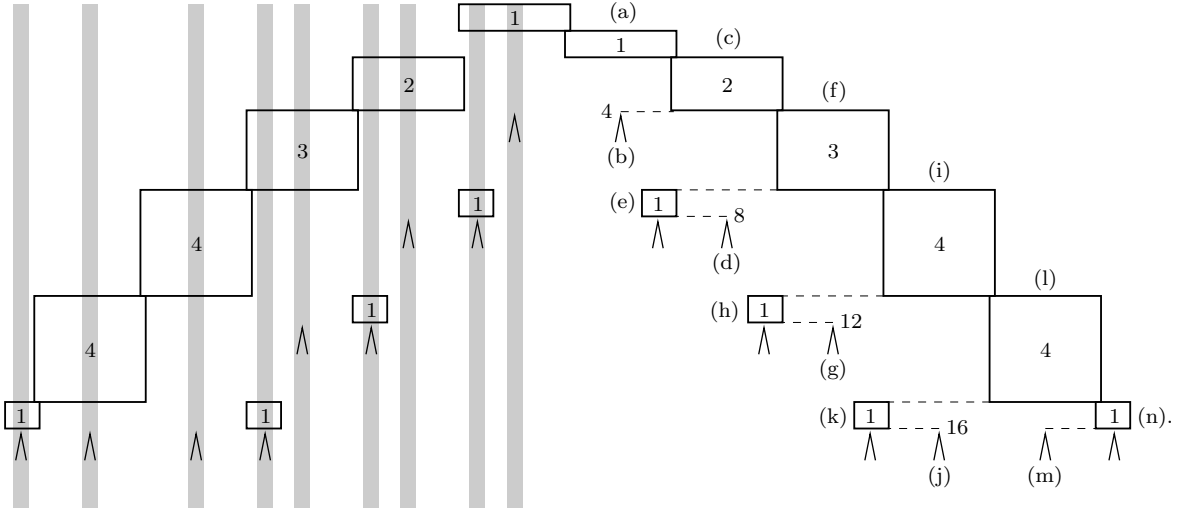}\end{center}
\caption{A $4$-cap. Each wedge-shaped cone marks the top of a void reserved for a previously constructed wall.
A wall is placed in each void when the cap is extended to a wall by Proposition~\ref{tcap}.
On the left are drawn tall gray bars, each corresponding to a hatched region (sparse and empty combined) of Figure~\ref{f05}.
A number $h$ inside a box 
indicates $h$ vertices with a common 
interval and $h$ consecutive colors.
}\label{f07}
\end{figure}
With Figure~\ref{f05} to remind us of cap conditions,
let us verify that Figure~\ref{f07} depicts a 4-cap.
Each box
has a cone beneath it.
Each cone has a void of positive width below.
When a vertical line segment is drawn from the top of the cap to the top of a cone, at most 25\% of the ink covers boxes.
Each box is supported by other boxes down to its cone.
So $4\le R$ by Proposition~\ref{tcap}.

We have verified that Figure~\ref{f07} depicts a 4-cap.
Now let us see how this 4-cap was invented, toward improving on it.
How do we arrange boxes and cones to meet the cap conditions?
The plan roughly is this:
\begin{itemize}
\item A cap cannot be empty, so start with a box.
\item Each box needs a cone beneath it, placed low due to the sparseness condition.
\item When a cone is too low to support its box alone, add supporting boxes. A cone divides the lower space into left and right, so add a left supporter, a right supporter, or both.
\item Keep the left and right halves of the cap similar.
\end{itemize}
Here we execute the plan.
\begin{enumerate}
\item[(a)] Twin boxes 
of height 1 (\emph{$1$-tall}) go at the top.
One is marked (a) in Figure~\ref{f07}.
The rest of the cap is the same left and right. 
We describe the right.
\item[(b)] A cone marked (b) in Figure~\ref{f07} goes below the 1-tall box at depth $4\cdot 1 = 4$, suited to the target ratio
$r=4.$ 
That is, we place the cone high in order to keep small the number of levels in which the 1-tall box above it requires support; 
but not so high as to violate the sparseness condition. 
In particular, if the cone is positioned $d$ units from the cap top, then the sparseness condition
implies $d\ge 4,$ so we choose $d=4$.
The 1-tall box still requires support in 2 levels.
This is addressed immediately.
\item[(c)] A 2-tall box is placed to fill the vertical gap between the bottom of box (a) and its cone (b). We are filling the gap to
satisfy the cap support condition for intervals of box (a).
\item[(d)] A cone goes below box (c) at proper depth ($4\cdot 2=8$). A gap of 4 colors remains between the new cone and the 2-tall box it supports.
\item[(e)] A 1-tall box (and cone) fits on the inside, leaving a gap of only 3 on the outside.
The idea is to distribute the support of box (c) between two others, boxes (e) and (f), left and right of the cone under box (c). 
Just left of box (c), there is only 1 unit of ``weight'' above (i.e., box (a)), and yet we are at depth 8, yielding the opportunity to add another unit of weight and still satisfy the sparseness condition.
\item[(f)] A 3-tall box fills the gap. 
\item[(g)] A cone goes below. A 5-gap remains (between cone and 3-tall box).
\item[(h)] A 1-tall box fits inside.
\item[(i)] A 4-tall box fills the gap.
\item[(j)] A cone goes below. A 5-gap remains.
\item[(k)] A 1-tall box fits inside.
\item[(l)] A 4-tall box fills the gap.
\item[(m)] A cone goes below. A 1-gap remains.
\item[(n)] A 1-tall box on the outside finishes the construction.
\end{enumerate}

Can we improve this result by the same procedure with $r=5$?
Let us record the sequence $u_1, u_2, \ldots$ of box heights along the outer
strand, which
for $r=4$ is
$1,2,3,4,4$. 
The procedure halts because
the last number does not exceed its predecessor.
For $r=5$ the sequence is $1,3,8,22,61,170,475,1329,3721,10422,29196,\ldots .$
We cannot complete the construction if the sequence increases forever. Does it?

Crucially, sequence
 $u_1,u_2,\ldots$ 
obeys a linear recurrence.
This is depicted in Figure~\ref{f08}.
\psfrag{un-3}{$u_{n-3}$}
\psfrag{un-2}{$u_{n-2}$}
\psfrag{un-1}{$u_{n-1}$}
\psfrag{un}{$u_n$}
\psfrag{un-2-un-3}{$u_{n-2}-u_{n-3}$}
\psfrag{un-1-un-2}{$u_{n-1}-u_{n-2}$}
\psfrag{run-2}{$ru_{n-2}$}
\psfrag{run-1}{$ru_{n-1}$}
\psfrag{B}{$B$}
\psfrag{OUTERCOURSE}{outer strand}
\begin{figure}
\begin{center}\includegraphics{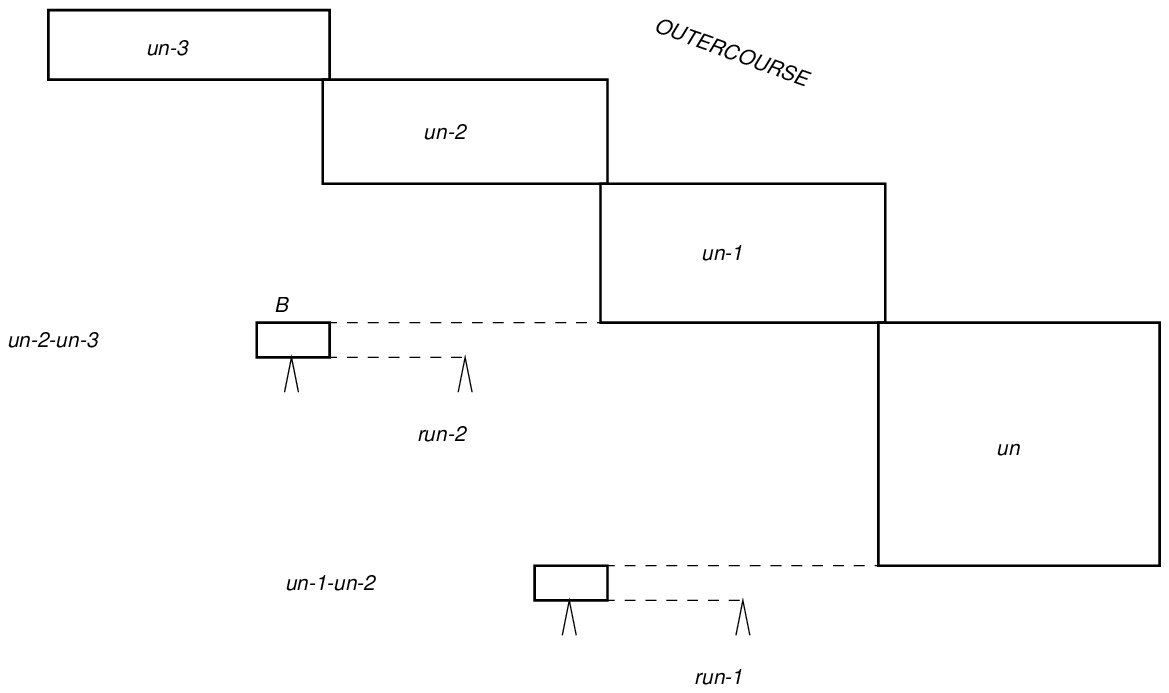}\end{center}
\caption{Obtain a recurrence by comparing depths of cones under two consecutive boxes of the outer strand.}\label{f08}
\end{figure}
The recurrence follows 
from the procedure 
by a comparison of depths of cones under two consecutive boxes ($u_{n-2}$-tall and $u_{n-1}$-tall) of the outer strand.
When a box $B$ is placed on the inside to support the $u_{n-2}$-tall box, 
$B$ has no supporting box of its own.
So the cone under $B$ is at the same depth $ru_{n-2}$ as the cone under the $u_{n-2}$-tall box.
To satisfy the sparseness condition for $B,$ the total height of boxes above the cone under $B$ must be at most $u_{n-2},$ and in the procedure we choose equality.
Above $B$ is a $u_{n-3}$-tall box.
So $B$ is $(u_{n-2} - u_{n-3})$-tall.
Similarly a $(u_{n-1}-u_{n-2})$-tall box supports the $u_{n-1}$-tall box from the inside.
Comparing cone depths,
\[ru_{n-2}-(u_{n-2}-u_{n-3})+u_n + (u_{n-1}-u_{n-2}) = ru_{n-1},\] so
heights of boxes of the outer strand are given by the sequence
\begin{eqnarray*}
u_0 = u_1 &=& 1\\
u_2 &=& r-2 \\
u_n &=& (r-1)u_{n-1} - (r-2)u_{n-2}-u_{n-3}
\end{eqnarray*}
for $n\ge 3.$
What determines the eventual behavior of such sequences? The discriminant
\[D = -31+6r+7r^2-6r^3+r^4\] 
of the characteristic polynomial
$1-(r-1)x + (r-2) x^2+x^3$
of the recurrence is negative when $r=4$ and positive when $r=5.$
As we will see when we refine the construction, 
such distinctions are important to sequence behavior.

Meanwhile we may improve the bound $4\le R$ by considering 
noninteger $r$ as \'Slusarek \cite{Sl} did when he obtained $4.45\le R.$
Observe that if each number in Figure~\ref{f07} is doubled, say,
the result is again a cap.
Therefore we may use (temporarily) rational numbers instead of integers for 
box heights and depths.
$D$ has a root \[r_+ = 1.5 + 0.5 \sqrt{13 + 16\sqrt{2}} \approx 4.48.\]
The best
we can do with the rational generalization of the method of 
Chrobak and \'Slusarek is 
$4.48 \approx r_+ \le R$.
(Of course $r_+$ is irrational, but a sequence of rationals increasing to $r_+$
establishes $r_+ \le R.$)

\section{A new construction}\label{s3}

In the previous section, our attempt to obtain a lower bound $5\le R$ failed due to an unhalting procedure.
Yet a certain resource was not fully exploited: 
the cap grew only 1 box inward.
That is, boxes of the outer strand had 1 or 2 supporting boxes, but
inner boxes (e), (h), and (k) of Figure~\ref{f07} had no supporters.
We will enable more growth inward by a construction that generalizes the one of Chrobak and \'Slusarek.

Let us reformulate the cap conditions as they apply to our construction.
\begin{enumerate}
\item Intervals are bundled into boxes, where a box of height $h$ represents identical intervals in $h$ consecutive levels.
Distinct boxes meet pairwise in sets of area 0.
Intervals bundled in a common box share a cone.
\begin{enumerate}
\item \emph{Emptiness}:
Each box has a cone beneath it, representing a void for an older wall.
\item \emph{Sparseness}:
Each vertical line segment from cap top to cone top (note: cones have positive width, so for a given cone, there are infinitely many such line segments) meets boxes in portion at most $1/r$ of its length. 
If $d$ is the distance of cone top from cap top, and $w$ (for \emph{weight above} the cone) is the total length of the part that meets boxes,
then the sparseness condition is $d \ge rw.$
\item \emph{Support}:
Each box is supported by at most two others below it that cover the vertical extent from its bottom to its cone top.
\end{enumerate}

\item Depths of box features (i.e., tops and bottoms) need not be integral, but are rational. They can be made integral by scaling later.
\end{enumerate}

This defines a \emph{rational cap}.
Our construction is less direct than that of Chrobak and \'Slusarek in two ways.
First, there is no rational cap with $r=5$ (see Section~\ref{nocap}).
Instead we construct a sequence of rational caps with $r\to 5.$ 
Second, each rational cap is made in
a finite sequence of quasi-cap constructions.

\begin{defn}
The construction of the $4$-cap of Chrobak and \'Slusarek (Figure~\ref{f07}) begins 
with twin 1-tall boxes at the top, one supporting the other.
Apart from the difference in vertical position of the twin 1-tall boxes,
the left and right sides are mirror images.
Consequently, the lower 1-tall box has at most one supporter.
This supporter (if any) is the \emph{key box}, or the box in \emph{key position}.
In Figure~\ref{f07} the key box is a 2-tall box labeled (c).
Such a rational cap (with twin 1-tall boxes, symmetry, and at most 1 supporter of the lower twin) 
is \emph{normal.}
\end{defn}

\begin{defn}
In a normal rational cap, the purpose of a key box is to support a 1-tall box,
so the top of the key box meets the bottom of the 1-tall box.
A \emph{quasicap} is a normal rational cap, except that a quasicap always has a key box, and the top of the key box may fall short, leaving a vertical gap (as in Figures \ref{f09} and \ref{f10}).
Further, we require of a quasicap that apart from the twin 1-tall boxes, no box has its top higher than the top of the key box.
\end{defn}

\psfrag{tr}{$r$}
\psfrag{theta}{$\theta$}
\psfrag{1side}{${\scriptstyle \text{1 side}}$}
\psfrag{keyside}{${\scriptstyle \text{key side}}$}
\begin{figure}
\begin{center}\includegraphics{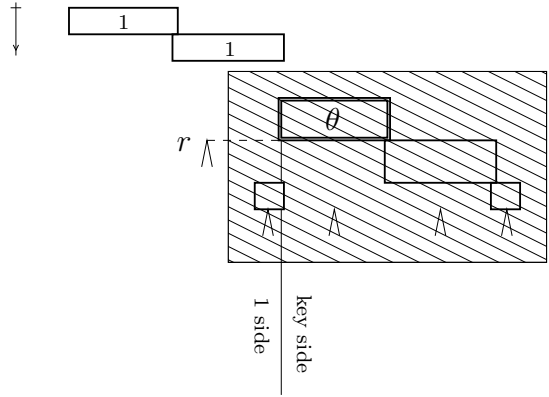}\end{center}
\caption{This quasicap has a $\theta$-tall box in the key position (bold).
When $\theta < r - 2,$ there is a gap between this box and the 1-tall box it should support.
The hatched portion appears in the next figure.
}\label{f09}
\end{figure}
\psfrag{u0}{{\tiny $u_0=1$}}
\psfrag{u1}{{\tiny $u_1=1$}}
\psfrag{u2}{$u_2=\theta + \delta$}
\psfrag{u3}{$u_3$}
\psfrag{uN+1}{$u_{N+1}$}
\psfrag{tru2}{$ru_2$}
\psfrag{tru3}{$ru_3$}
\psfrag{truN}{$ru_{N}$}
\psfrag{KEYBOX}{key box}
\psfrag{INNERCOPY1}{copy $1$}
\psfrag{INNERCOPY2}{copy $2$}
\psfrag{OUTERCOPY}{copy $N$}
\psfrag{THEGAP}{{\tiny This gap has shrunk by $\delta.$}}
\begin{figure}
\begin{center}\includegraphics{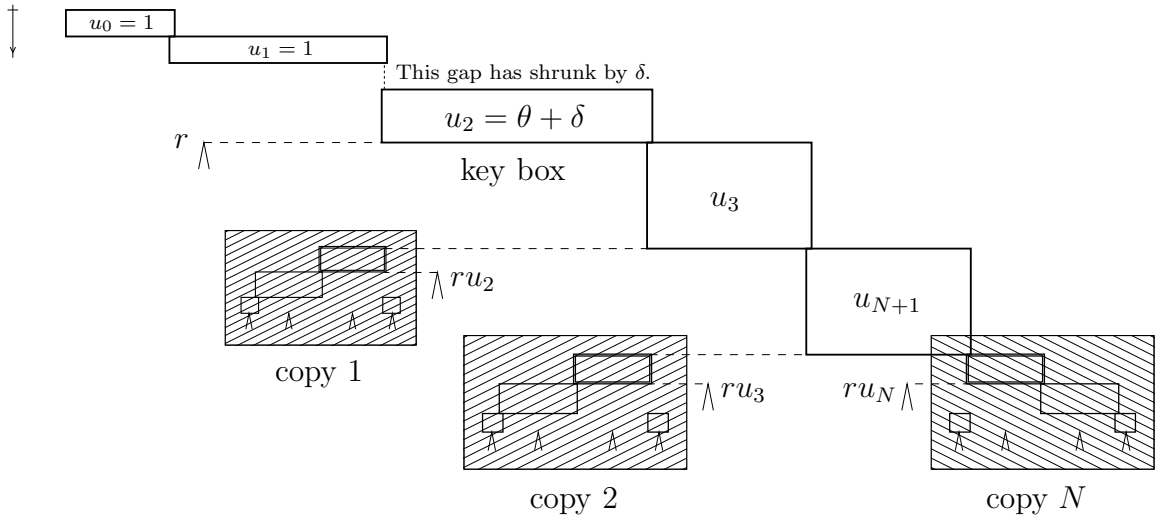}\end{center}
\caption{This quasicap has a $(\theta+\delta)$-tall box in the key position.
Pasted into place as shown are distorted copies of the hatched portion of the rational cap
from the previous figure.
(The main text defines the distortion and explains why the result is a quasicap.)
The goal is a quasicap with an $(r-2)$-tall box in the key position (that is, a rational cap).
The sequence of boxes in the outer strand depends on our choice $\delta$ (the size of our attempt to improve).
If $\delta$ is too large, then the sequence fails to terminate. 
}\label{f10}
\end{figure}

Our goal is to make a rational cap from a sequence of quasicaps where the vertical gap shrinks to nothing.
Figures \ref{f09} and \ref{f10} illustrate a gap-reducing step.
From a quasicap with a $\theta$-tall key box we make one with a $(\theta+\delta)$-tall key box.
Outer-strand box heights of the new quasicap depend on $\theta$ and $\delta$. A linear recurrence defines those heights:
\begin{eqnarray*}
u_0 = u_1 &=& 1\\
u_2 &=& \theta + \delta \\
u_n &=& (r-\theta)u_{n-1} - (r-2\theta)u_{n-2}-\theta u_{n-3}
\end{eqnarray*}
for $n\ge 3.$
In the special case $(\theta,\delta)=(1,r-3),$ this definition agrees with the sequence of outer-strand box heights
in the construction of Chrobak and \'Slusarek.
That construction can be regarded as an instance of ours
in the case of a single gap-reducing step from a trivial initial quasicap (with a 1-tall box
in the key position).

\begin{prop}\label{p3}
Fix some rational $r>4.$
Suppose there exists a quasicap with key box height $\theta \ge 1$ 
(henceforth a \emph{$\theta$-quasicap}). Suppose also $\theta < r-2.$
If there exist rational $\delta>0$ and integer $N>1$ so that 
the sequence $u_0,u_1,u_2,\ldots$ defined above satisfies
\[u_0 = u_1 \le u_2 < \cdots < u_N\ge u_{N+1},\]
then a $(\theta+\delta)$-quasicap exists.
\end{prop}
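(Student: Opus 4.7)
The plan is to construct the desired $(\theta+\delta)$-quasicap $Q'$ by the recipe illustrated in Figure~\ref{f10}: place twin $1$-tall boxes at the top of $Q'$, then insert a key box of height $u_2 = \theta+\delta$ immediately below the lower twin. To the key side, paste $N-1$ successively larger distorted copies of the hatched portion of the given $\theta$-quasicap $Q$ (the sparse region above one key-side outer-strand box), scaled vertically so that the pasted copies' outer-strand boxes have heights $u_3, u_4, \ldots, u_N$ as prescribed by the recurrence. Between consecutive outer-strand boxes, insert a single inner supporting box of height $u_{n-1}-u_{n-2}$ to bridge the remaining vertical gap (analogous to the inside boxes (e), (h), (k) of Figure~\ref{f07}). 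The inequality $u_N \ge u_{N+1}$ is what permits the final outer-strand box to be supported by a single inner box without forcing yet another outer-strand box, so the construction terminates. Finally, mirror the arrangement across the vertical center line to produce a normal quasicap.

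Three cap conditions then must be verified for $Q'$. \emph{Emptiness} is immediate: each cone inherited from a pasted copy still sits above a void after vertical scaling, and each newly introduced box is drawn with its own fresh cone beneath. \emph{Support} holds by construction: each outer-strand box of height $u_n$ is supported on one side by its neighboring outer-strand box of height $u_{n+1}$ and on the other by the newly placed inner $(u_{n-1}-u_{n-2})$-tall box, while boxes interior to the pasted copies retain the support structure they had in $Q$ after scaling.

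The main obstacle is the \emph{sparseness} condition $d \ge r w$ for every cone in $Q'$. For a cone inherited from a pasted copy, the depth $d$ increases (relative to its value in $Q$) by the height of everything now above the copy, while the weight $w$ increases by those boxes' heights; a careful accounting shows sparseness is preserved because each copy is scaled by the ratio of the outer-strand box it sits below to the original key-box height $\theta$ of $Q$. For cones newly introduced beneath outer-strand boxes of $Q'$, a cone-depth comparison between two consecutive outer-strand boxes, exactly as in Figure~\ref{f08} but with the role of a $1$-tall top box played by a $\theta$-tall supporter, forces sparseness to hold with equality precisely when the heights obey the recurrence $u_n = (r-\theta)u_{n-1} - (r-2\theta)u_{n-2} - \theta u_{n-3}$. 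Under the hypothesis $u_2 < \cdots < u_N$, sparseness is thus tight (hence satisfied) at each outer-strand cone, and $u_N \ge u_{N+1}$ is exactly what allows the final outer cone to satisfy sparseness while stopping the construction. Assembling these verifications, $Q'$ meets all the conditions of a $(\theta+\delta)$-quasicap.
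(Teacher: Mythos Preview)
Your description of the construction conflates two different recipes and, as written, does not produce a quasicap. In the paper's construction the outer-strand boxes of heights $u_0,\ldots,u_{N+1}$ are placed \emph{fresh}, and then $N$ affinely rescaled copies of the \emph{entire} right side of the old $\theta$-quasicap (minus its twin $1$-tall boxes) are pasted beneath them. The inner support for the $u_{n+1}$-tall outer box is the image of the old key box inside copy $n$, not a freshly inserted ``single inner supporting box of height $u_{n-1}-u_{n-2}$'' as you write. That single-box picture is the Chrobak--\'Slusarek construction of Figure~\ref{f07}, which is the special case $\theta=1$; it cannot give the recurrence $u_n=(r-\theta)u_{n-1}-(r-2\theta)u_{n-2}-\theta u_{n-3}$ for general $\theta$, and hence cannot push $r$ past $r_+\approx 4.48$. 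Your scaling factor is also wrong: copy $n$ is stretched vertically by the factor $u_{n+1}-u_n$ via $\varphi_n(y)=(u_{n+1}-u_n)y+ru_n$, not by any ratio involving $\theta$.

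The substantive work you skip is the sparseness check for boxes \emph{inside} the pasted copies (Lemma~\ref{sparse}). One must show that if a box $B$ of the old quasicap had cone depth $d$ and weight $w$ with $d\ge rw$, then its image $B'$ has $d'=\varphi_n(d)\ge r w'$. Computing $w'$ requires separating the cases where $B$ is on the ``1 side'' versus the ``key side'' of the old quasicap: on the 1 side one loses the old twin ($-1$) and gains $u_{n+1}$ from the new outer strand, on the key side one gains $u_n$ with no loss, and both give $w'=(u_{n+1}-u_n)w+u_n$. That these two cases coincide is exactly what makes the construction work, and it is the step your ``careful accounting shows'' glosses over. Likewise, verifying that the scaled key box exactly bridges the support gap (top at depth $r+u_3+\cdots+u_{n+2}$, bottom at $ru_{n+1}$) requires the explicit computations of Lemmas~\ref{imagebottom}--\ref{imagetop} using the recurrence; it is not a one-line analogy with Figure~\ref{f08}.
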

This result, proved in the many lemmas of this section, 
is a way to produce rational caps,
provided that improvements $\delta$ can be made until the gap shrinks to 0 (or equivalently, the key box grows to height $r-2$).

\begin{defn}
Here is the construction of Proposition~\ref{p3}.
The gap-reducing step is from an old quasicap (Figure~\ref{f09}) to a
\emph{new object} (Figure~\ref{f10}) which we will show to be a quasicap. 
\begin{enumerate}
\item As in Figure~\ref{f10}, lay out an outer strand of boxes of heights
$u_0,u_1,u_2,u_3,\ldots, u_{N+1}$ down the right side.
The $u_2$-tall box has its bottom at depth $r$, and for $n= 2,\ldots,N+1$ the $u_n$-tall box has its bottom at 
depth $r + u_3 + u_4 + \cdots + u_n.$
Place the cone of the $u_n$-tall box at depth $ru_n$ for $n=0,\ldots,N.$
Place the cone of the $u_{N+1}$-tall box at depth $ru_N.$
\item Paste into place under the outer strand (see Figure~\ref{f10})
copies $1,\ldots,N$ of part of the old quasicap (from Figure~\ref{f09}).
The part copied is only the right side of the old quasicap, excluding the twin 1-tall boxes.
These are not exact copies, but are stretched or squeezed by an amount calculated to satisfy the sparseness condition.
That the sparseness condition is satisfied is verified at the end of this section.
The stretching, squeezing, and moving of a given copy $n$ can be expressed in a cartesian coordinate system by an invertible affine plane transformation \[(x,y) \mapsto (\mu_n(x), \varphi_n(y)),\]
where 
\begin{enumerate}
\item the horizontal component $\mu_n$ of the transformation, itself an invertible affine transformation, is strictly decreasing for $n=1,\ldots,N-1$ and strictly increasing for $n=N$,
and 
\item the vertical component $\varphi_n$ is the invertible affine transformation
\[\varphi_n(y) = (u_{n+1}-u_n)y + ru_n\]
for $n=1,\ldots,N-1,$ with $\varphi_N = \varphi_{N-1},$
where $y$ is depth measured from cap top.
\end{enumerate}
Function $\mu_n$ is continuous; it keeps each box in one piece.
It is strict; it preserves cap features.
When it decreases, features are reversed horizontally.
When it increases, they are not reversed.
We consider $\mu_n$ to be defined by
Figure~\ref{f10}:
the copy should be squeezed into an interval so narrow that the copy avoids colliding with cones or other parts of the cap;
and the image of the key box should be in place to support the $u_{n+1}$-high box of the outer strand.
Copy $n$ is the image of the old quasicap part under transformation
$(x,y) \mapsto (\mu_n(x), \varphi_n(y)).$
\end{enumerate}
\end{defn}

We claim without proof these facts about 
transformation $(x,y) \mapsto (\mu_n(x), \varphi_n(y))$:
\begin{enumerate}
\item the image of a box is a box, and
\item when two boxes meet in a set of area 0, so do their images.
\end{enumerate}

Figure~\ref{f10} suggests that $u_{N+1} \ge 0.$
Let us prove it.

\begin{lemma}
The hypothesis of Proposition~\ref{p3} implies $u_{N+1} \ge 0$.
\end{lemma}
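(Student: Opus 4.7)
The plan is to rewrite the recurrence at index $N+1$ so that $u_{N+1}$ becomes a manifestly nonnegative combination of the two forward differences $u_N - u_{N-1}$ and $u_{N-1} - u_{N-2}$, then appeal to the monotonicity hypothesis and to the sign assumptions on $r$ and $\theta$.

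The key algebraic observation is that the three coefficients $(r-\theta)$, $-(r-2\theta)$, and $-\theta$ in the recurrence sum to zero. Whenever this happens in a three-term linear recurrence, a short rearrangement yields a telescoping-style identity. I would verify by direct expansion that
\[u_{N+1} = (r - \theta)(u_N - u_{N-1}) + \theta(u_{N-1} - u_{N-2}).\]
From the hypotheses $1 \le \theta < r - 2$, both coefficients $r - \theta > 2$ and $\theta \ge 1$ are positive.

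It remains to observe that the two factored differences are themselves nonnegative. The chain $u_0 = u_1 \le u_2 < u_3 < \cdots < u_N$ combined with $N \ge 2$ (since $N > 1$ is an integer) yields $u_N - u_{N-1} \ge 0$ and $u_{N-1} - u_{N-2} \ge 0$, with the degenerate case $N = 2$ giving $u_1 - u_0 = 0$ (still nonnegative). The inequality $u_{N+1} \ge 0$ then follows at once.

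I do not anticipate a real obstacle; the entire argument collapses to one line once the telescoping identity is spotted. The only points to be careful about are that the recurrence applies at index $N+1$ (which requires $N+1 \ge 3$, ensured by $N > 1$) and that the edge case $N = 2$ is handled by the weak inequality $u_0 \le u_1$ in the hypothesis. An alternative but less tidy route would be to split into cases on the sign of $r - 2\theta$ and bound each term of the raw recurrence separately; the identity above makes this case analysis unnecessary.
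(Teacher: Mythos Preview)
Your proposal is correct and is essentially identical to the paper's proof: the paper also rewrites $u_{N+1}=(r-\theta)(u_N-u_{N-1})+\theta(u_{N-1}-u_{N-2})$ and concludes immediately from the monotonicity hypothesis. Your additional remarks on the applicability of the recurrence at index $N+1$ and the edge case $N=2$ are sound and merely make explicit what the paper leaves implicit.
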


\begin{proof}
\begin{eqnarray*}
u_{N+1} &=& (r-\theta)u_N - (r-2\theta)u_{N-1} - \theta u_{N-2} \text{ (recurrent definition of $u_{N+1}$)}\\
&=& (r-\theta)(u_N - u_{N-1}) + \theta(u_{N-1} - u_{N-2}) \ge 0.
\end{eqnarray*}
\end{proof}

\begin{lemma}\label{imagebottom}
Transformation $\varphi_n$ maps the bottom of the old key box to depth $ru_{n+1}$ in the new object for $n=1,\ldots, N-1$.
\end{lemma}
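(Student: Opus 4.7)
The plan is to reduce this to a one-line computation by first establishing the invariant that in any quasicap arising from the construction of Proposition~\ref{p3}, the bottom of the key box sits at depth $r$ below the cap top. I would justify the invariant by induction on the number of gap-reducing steps: the construction explicitly places the new $u_2$-tall box (the key box of the $(\theta+\delta)$-quasicap) with its bottom at depth $r$, so the invariant is preserved by each step. For the trivial base case ($\theta=1$), the twin 1-tall boxes occupy depths $0$ to $2$ and the 1-tall key box below has its top at depth $r-1$ and its bottom at depth $r$, leaving a vertical gap of $r-3$ with the lower twin.

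Given the invariant, the lemma follows by direct substitution of $y=r$ into the explicit formula $\varphi_n(y) = (u_{n+1}-u_n)y + ru_n$:
\[\varphi_n(r) = (u_{n+1}-u_n)\,r + ru_n = ru_{n+1}.\]

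The main (and essentially only) obstacle is establishing the invariant cleanly: the quasicap definition does not state ``key box bottom at depth $r$'' outright, so it must be carried through the construction as a structural feature that both the input and output quasicaps of Proposition~\ref{p3} share. I would also want to be careful about coordinate conventions, noting that the domain of $\varphi_n$ is the old quasicap's depth coordinate while its codomain is the new cap's depth coordinate, so that the resulting value $ru_{n+1}$ is correctly interpreted as the depth, in the new object, at which the cone of the $u_{n+1}$-tall box has been placed.
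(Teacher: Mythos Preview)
Your proposal is correct and follows essentially the same route as the paper: both reduce to the computation $\varphi_n(r) = (u_{n+1}-u_n)r + ru_n = ru_{n+1}$. The paper simply asserts ``In the old quasicap, the key box bottom is at depth $r$'' and computes, whereas you add an explicit inductive justification of that fact as an invariant of the construction; this is a reasonable elaboration of a point the paper treats as evident from the construction (the $u_2$-tall box is placed with its bottom at depth $r$).
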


\begin{proof}
In the old quasicap, the key box bottom is at depth
$r$.
Therefore under transformation $\varphi_n,$ the new bottom is at depth
\[\varphi_n(r) = (u_{n+1}-u_n)r + ru_n = ru_{n+1}.\]
\end{proof}

The following lemma will be used twice in the proof of the lemma after it.
\begin{lemma}\label{shiftscale}
For $n = 1, \ldots, N-1,$ 
\[\varphi_n(r-\theta) = u_{n+2} +(r-\theta) (u_{n}-u_{n-1}) + ru_{n-1}.\]
\end{lemma}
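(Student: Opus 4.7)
The plan is to prove this by direct algebraic manipulation using the definition of $\varphi_n$ together with the recurrence for $u_n$. First I would unfold the definition: since $\varphi_n(y) = (u_{n+1} - u_n)y + ru_n$, substituting $y = r - \theta$ immediately gives
\[\varphi_n(r-\theta) = (u_{n+1} - u_n)(r - \theta) + ru_n.\]
Then the task reduces to showing the purely algebraic identity
\[(u_{n+1} - u_n)(r - \theta) + ru_n \;=\; u_{n+2} + (r-\theta)(u_n - u_{n-1}) + ru_{n-1}.\]

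Next I would move the $(r-\theta)$-terms to one side, obtaining
\[(r-\theta)\bigl(u_{n+1} - 2u_n + u_{n-1}\bigr) + r(u_n - u_{n-1}) \;=\; u_{n+2}.\]
The right-hand side is exactly what the recurrence $u_{n+2} = (r-\theta)u_{n+1} - (r-2\theta)u_n - \theta u_{n-1}$ provides, valid since $n+2 \ge 3$ (recall $n \ge 1$). So I would expand the left-hand side, collect the coefficients of $u_{n+1}$, $u_n$, and $u_{n-1}$ separately, and verify they agree with those of the recurrence: the $u_{n+1}$ coefficient is $r-\theta$; the $u_n$ coefficient is $-2(r-\theta) + r = -(r - 2\theta)$; the $u_{n-1}$ coefficient is $(r-\theta) - r = -\theta$. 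Matching the recurrence completes the identity.

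There is no real obstacle here; the lemma is a bookkeeping identity that is rigged precisely so that the recurrence for $u_n$ makes it work. The only thing to be careful about is that the recurrence is only valid for indices $\ge 3$, and the range $n = 1, \ldots, N-1$ indeed makes $n+2 \ge 3$, so the application is legitimate. I would present the computation as a short displayed calculation following the unfolding of $\varphi_n$, then cite the recurrence to conclude.
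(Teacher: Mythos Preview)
Your proposal is correct and follows essentially the same approach as the paper: both unfold the definition of $\varphi_n$ and then invoke the recurrence for $u_{n+2}$ to complete the algebra. The paper presents the computation as a forward chain of equalities rather than isolating and comparing coefficients, but the content is identical.
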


\begin{proof}
\begin{eqnarray*}
\varphi_n(r-\theta) &=& (u_{n+1}-u_{n})(r-\theta) + ru_{n} \text{ (definition of $\varphi_n$)}\\
&=& (r-\theta)u_{n+1} - (r-2\theta)u_{n} -\theta u_{n-1} -\theta u_{n} + \theta u_{n-1} + ru_{n} \\
&=& u_{n+2} -\theta u_{n} + \theta u_{n-1} + ru_{n} \text{ (recurrent definition of $u_{n+2}$)}\\
&=& u_{n+2} +(r-\theta) (u_{n}-u_{n-1}) + ru_{n-1}.
\end{eqnarray*}
\end{proof}

\begin{lemma}\label{imagetop}
Transformation $\varphi_n$ maps the top of the old key box (depth $r-\theta$ in the old quasicap)
to depth $r + u_3 + u_4 + \cdots +u_{n+2}$ in the new object for $n=1,\ldots,N-1.$
\end{lemma}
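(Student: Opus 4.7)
The plan is to prove the lemma by induction on $n$, reducing to Lemma~\ref{shiftscale} together with the defining recurrence for the sequence $u_0, u_1, u_2, \ldots$.

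For the base case $n = 1$, apply Lemma~\ref{shiftscale} directly. Since $u_1 = u_0 = 1$, the middle term $(r-\theta)(u_1 - u_0)$ vanishes, leaving $\varphi_1(r-\theta) = u_3 + 0 + r u_0 = r + u_3$, which matches the target $r + u_3 + u_4 + \cdots + u_{n+2}$ at $n = 1$.

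For the inductive step with $n \ge 2$, I would assume $\varphi_{n-1}(r-\theta) = r + u_3 + \cdots + u_{n+1}$ and show $\varphi_n(r-\theta) = \varphi_{n-1}(r-\theta) + u_{n+2}$. Using Lemma~\ref{shiftscale} at both indices $n$ and $n-1$, the difference is
\[
\varphi_n(r-\theta) - \varphi_{n-1}(r-\theta) = (u_{n+2} - u_{n+1}) + (r-\theta)(u_n - 2u_{n-1} + u_{n-2}) + r(u_{n-1} - u_{n-2}).
\]
The required identity then collapses to showing $u_{n+1} = (r-\theta)u_n - (r-2\theta)u_{n-1} - \theta u_{n-2}$ after collecting coefficients of $u_n, u_{n-1}, u_{n-2}$. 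This is exactly the recurrence by which the sequence $u_0, u_1, u_2, \ldots$ was defined, so the inductive step closes.

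There is no real obstacle: the entire lemma is a short algebraic identity, and the only mildly delicate point is matching the sum-indexing at the base case so that the empty or near-empty sum agrees with $\varphi_1(r-\theta)$. The key conceptual observation is that Lemma~\ref{shiftscale} was engineered precisely so that the recurrence for $u_{n+2}$ turns $\varphi_n(r-\theta)$ into a telescoping expression whose partial sums are $r + u_3 + u_4 + \cdots + u_{n+2}$; the induction is just the formal record of that telescoping.
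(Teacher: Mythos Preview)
Your proof is correct and follows the same inductive scheme as the paper: the base case is identical, and in the inductive step you establish the same increment $\varphi_n(r-\theta)=\varphi_{n-1}(r-\theta)+u_{n+2}$. The paper's inductive step is slightly shorter because it observes directly from the definition of $\varphi_{n-1}$ that the trailing terms $(r-\theta)(u_n-u_{n-1})+ru_{n-1}$ in Lemma~\ref{shiftscale} equal $\varphi_{n-1}(r-\theta)$, so no second appeal to Lemma~\ref{shiftscale} or to the recurrence is needed.
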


\begin{proof}
By induction on $n.$
In case $n=1$,
\begin{eqnarray*}
\varphi_1(r-\theta) 
&=& u_3 +(r-\theta) (u_1-u_0) + ru_0 \text{ (by Lemma~\ref{shiftscale})}\\
&=& u_3 + r \text{ (because $u_0 = u_1 = 1$)}.
\end{eqnarray*}
In case $n>1,$
\begin{eqnarray*}
\varphi_n(r-\theta) 
&=& u_{n+2} +(r-\theta) (u_{n}-u_{n-1}) + ru_{n-1}  \text{ (by Lemma~\ref{shiftscale})}\\
&=& u_{n+2} +\varphi_{n-1}(r-\theta)  \text{ (definition of $\varphi_{n-1}$)}\\
&=& u_{n+2} + r + u_3 + u_4 + \cdots + u_{n+1}  \text{ (by inductive hypothesis)}.
\end{eqnarray*}
\end{proof}

\begin{lemma}\label{suppouter}
The $u_{n+1}$-tall box of the outer strand of the new object is supported for $n=1,\ldots,N-1.$ 
\end{lemma}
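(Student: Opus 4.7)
The plan is to exhibit two boxes below the $u_{n+1}$-tall box whose combined vertical extent covers the depth range from its bottom down to its cone top. The natural candidates are the next outer-strand box, $u_{n+2}$-tall, which sits directly below it, and the image under the transformation $(x,y)\mapsto(\mu_n(x),\varphi_n(y))$ of the old key box, which lies in copy $n$.

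First I would record the relevant depths. By the construction, the $u_{n+1}$-tall box has its bottom at depth $r+u_3+\cdots+u_{n+1}$ and its cone at depth $ru_{n+1}$, so support is needed over the depth interval $[r+u_3+\cdots+u_{n+1},\, ru_{n+1}]$. The $u_{n+2}$-tall box below it has bottom at depth $r+u_3+\cdots+u_{n+2}$ and height $u_{n+2}$, so it covers $[r+u_3+\cdots+u_{n+1},\, r+u_3+\cdots+u_{n+2}]$. The image of the old key box in copy $n$ has, by Lemma~\ref{imagetop}, its top at depth $r+u_3+\cdots+u_{n+2}$ and, by Lemma~\ref{imagebottom}, its bottom at depth $ru_{n+1}$; hence it covers $[r+u_3+\cdots+u_{n+2},\, ru_{n+1}]$. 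These two intervals meet at depth $r+u_3+\cdots+u_{n+2}$ and together cover exactly the required range.

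Finally I would note that both supporter boxes lie below the $u_{n+1}$-tall box and that, by the construction of $\mu_n$, the image of the old key box is squeezed onto the inner side of the cone while the $u_{n+2}$-tall box sits on the outer side of it, so the two flank the cone of the $u_{n+1}$-tall box as a legitimate pair of supporters. Lemmas~\ref{imagetop} and \ref{imagebottom} were arranged precisely so that the image of the old key box plugs the gap between the outer-strand neighbor below and the cone top; once those depths are aligned, the verification reduces to interval arithmetic, and I expect no real obstacle here.
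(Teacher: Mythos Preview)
Your proposal is correct and matches the paper's own proof essentially line for line: both identify the $u_{n+2}$-tall outer-strand box and the $\varphi_n$-image of the old key box as the two supporters, and both verify coverage of the interval $[r+u_3+\cdots+u_{n+1},\,ru_{n+1}]$ by invoking Lemmas~\ref{imagetop} and~\ref{imagebottom} for the key-box image. Your final paragraph about horizontal placement via $\mu_n$ is a slight elaboration beyond what the paper states explicitly, but it is consistent with the construction and does no harm.
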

\begin{proof}
Support of the $u_{n+1}$-tall box requires coverage of the vertical extent from its bottom (depth $r+u_3 + \cdots+ u_{n+1}$) to its cone top (depth $ru_{n+1}$).
The neighboring $u_{n+2}$-tall box covers the vertical extent from depth $r+u_3+\cdots+u_{n+1}$ to depth $r+u_3+\cdots+u_{n+2}.$
The image under $\varphi_n$ of the old key box covers the rest, i.e., the vertical extent from depth $r+u_3+\cdots+u_{n+2}$ (Lemma~\ref{imagetop}) to
depth $ru_{n+1}$ (Lemma~\ref{imagebottom}).
\end{proof}

\begin{lemma}\label{suppouterlast}
The $u_{N+1}$-tall box of the outer strand of the new object is supported.
\end{lemma}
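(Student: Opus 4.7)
My plan is to verify that the image of the old key box in copy $N$ covers exactly the vertical gap beneath the $u_{N+1}$-tall box, which provides a single supporter (the support condition permits up to two, so one is enough). This differs from Lemma~\ref{suppouter} in that there is no ``$u_{N+2}$-tall box'' in the outer strand to share the burden; the construction terminates at $u_{N+1}$, so copy $N$ must do all the work alone.

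First I identify the vertical extent to be covered. By the construction, the $u_{N+1}$-tall box has its bottom at depth $r + u_3 + u_4 + \cdots + u_{N+1}$, and its cone top sits at depth $ru_N$ rather than $ru_{N+1}$---this is the exceptional rule, different from the earlier cones of the outer strand. Next I compute the depths of the top and bottom of the image of the old key box under $\varphi_N = \varphi_{N-1}$. Applying Lemma~\ref{imagetop} with $n = N-1$ gives that the image of the old key-box top lies at depth $r + u_3 + \cdots + u_{N+1}$, which is exactly the bottom of the $u_{N+1}$-tall box. Applying Lemma~\ref{imagebottom} with $n = N-1$ gives that the image of the old key-box bottom lies at depth $ru_N$, which is exactly the cone top. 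So the image covers the required extent, and the horizontal placement (by the chosen $\mu_N$) places it beneath the $u_{N+1}$-tall box.

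There is no real obstacle---the verification is a direct appeal to the two preceding lemmas. What the argument really shows is that the peculiar construction choices ($\varphi_N = \varphi_{N-1}$ and cone depth $ru_N$ instead of $ru_{N+1}$ for the outermost box) were engineered precisely so that the image of the old key box fills this final gap. The naive alternative $\varphi_N(y) = (u_{N+1}-u_N)y + ru_N$ would be degenerate or orientation-reversed because $u_{N+1} \le u_N$ by the previous lemma, which motivates the chosen setup.
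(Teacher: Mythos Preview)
Your proof is correct and follows the same approach as the paper: you identify the required vertical extent (from depth $r+u_3+\cdots+u_{N+1}$ down to depth $ru_N$), then use $\varphi_N=\varphi_{N-1}$ together with Lemmas~\ref{imagetop} and~\ref{imagebottom} at $n=N-1$ to show the image of the old key box covers exactly this extent. Your added commentary explaining \emph{why} the exceptional choices $\varphi_N=\varphi_{N-1}$ and cone depth $ru_N$ were made is a nice touch not present in the paper's terser proof.
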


\begin{proof}
Like the $u_0$- and $u_1$-tall boxes,
and unlike
the $u_n$-tall box for $n=2,\ldots,N,$ 
the $u_{N+1}$-tall box has only one supporting box.
The image under transformation $\varphi_N$ of the old key box has its top at depth $r+u_3+\cdots + u_{N+1}$ by Lemma~\ref{imagetop} (level with the bottom of the $u_{N+1}$-tall box by definition of the new object) and its bottom at depth $ru_N$ (the depth of the cone under the $u_{N+1}$-tall box by definition of the new object).
\end{proof}

\begin{lemma}\label{nooverlap}
Distinct boxes of the new object meet pairwise in sets of area 0.
\end{lemma}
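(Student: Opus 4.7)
The plan is to split into three kinds of box pairs and handle each case separately. Partition the boxes of the new object into outer-strand boxes (the $u_n$-tall ones for $n=0,1,\ldots,N+1$) and copy boxes (those lying in copies $1,\ldots,N$, each the image of a right-side box of the old quasicap under a transformation of the form $(x,y)\mapsto(\mu_n(x),\varphi_n(y))$).

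First I would handle two outer-strand boxes. These all lie in a common narrow vertical strip on the right, and the construction places their depth intervals consecutively: $[0,1]$ for $u_0$, $[1,2]$ for $u_1$, $[r-u_2,r]$ for $u_2$, and $[r+u_3+\cdots+u_{n-1},\,r+u_3+\cdots+u_n]$ for $n=3,\ldots,N+1$. Because $u_n\ge 0$ for every $n$ in question --- the preceding lemma covers $n=N+1$, while $u_n>0$ for $1\le n\le N$ by the hypothesis $u_1\le u_2<\cdots<u_N$ --- no two of these depth intervals overlap in positive length, and consecutive intervals meet only along a horizontal line of area $0$.

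Next I would handle two boxes of the same copy $n$. The old quasicap, being a normal rational cap, has the property that distinct boxes meet in sets of area $0$. Applying the two claimed facts about the affine transformation $(x,y)\mapsto(\mu_n(x),\varphi_n(y))$ --- namely, that it maps boxes to boxes and preserves area-$0$ intersections --- carries this property over to copy $n$.

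Finally I would handle the cross pairs: a box of copy $n$ paired with a box of a different copy $n'$, or with an outer-strand box. Here horizontal separation is the point. By the specification of $\mu_n$ in the construction, copy $n$ is squeezed into a narrow horizontal interval chosen to avoid cones and the rest of the cap; in particular, the narrow intervals used for the $N$ copies are pairwise disjoint, and each is disjoint from the narrow strip occupied by the outer strand --- except at the single place where the image of the old key box under $\varphi_n$ is positioned to support the $u_{n+1}$-tall outer-strand box from below. At that abutment, Lemma~\ref{imagetop} puts the top of the key-box image at depth $r+u_3+\cdots+u_{n+2}$, exactly level with the bottom of the $u_{n+1}$-tall box, so the intersection is at most a horizontal segment and has area $0$.

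The main obstacle is the informality of ``squeezed so narrow as to avoid collisions'' in the definition of $\mu_n$. I would substantiate it by the observation that the horizontal room available below each outer-strand cone is an interval of positive width, so finitely many pairwise-disjoint narrow intervals can be chosen inside that room for the copies, with each copy's image of the key box placed over the desired supporting position.
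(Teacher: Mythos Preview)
Your treatment of the outer--outer and same-copy cases matches the paper's. The copy-versus-copy part of your third case is also fine. The gap is in your copy-versus-outer-strand argument.

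You assert that the narrow horizontal interval carrying copy $n$ is disjoint from ``the narrow strip occupied by the outer strand.'' This cannot be right: for the key-box image of copy $n$ to \emph{support} the $u_{n+1}$-tall box (in the graph sense, i.e., their intervals intersect), the interval of that image must meet the interval of the $u_{n+1}$-tall box. Hence copy $n$ is horizontally \emph{inside} the interval of $u_{n+1}$ (and, by nesting, inside those of $u_0,\ldots,u_n$ as well). Horizontal separation alone does not dispose of these boxes. Relatedly, there is an off-by-one slip: Lemma~\ref{imagetop} places the top of the key-box image at depth $r+u_3+\cdots+u_{n+2}$, which is the bottom of the $u_{n+2}$-tall box, not the $u_{n+1}$-tall box.

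The missing ingredient is the quasicap clause ``apart from the twin 1-tall boxes, no box has its top higher than the top of the key box.'' Pushed through $\varphi_n$, this says every box of copy $n$ lies at or below depth $r+u_3+\cdots+u_{n+2}$ (for $n<N$; for $n=N$ use $\varphi_N=\varphi_{N-1}$ and Lemma~\ref{suppouterlast} to get $r+u_3+\cdots+u_{N+1}$). That gives \emph{vertical} separation of copy $n$ from the outer-strand boxes $u_0,\ldots,u_{n+1}$ (respectively $u_0,\ldots,u_N$); only the remaining, deeper outer-strand boxes are handled by the horizontal choice of $\mu_n$. This is exactly how the paper argues, and without invoking that quasicap clause your proof does not close.
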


\begin{proof}
For most box pairs, the conclusion is immediate:
\begin{itemize}
\item 2 boxes from the outer strand: by definition of the new object
\item 2 boxes from a common copy: because the old object is a quasicap
\item 2 boxes from distinct copies: by definition of $\mu_1,\ldots,\mu_N$.
\end{itemize}
In the remaining case, one box is from the outer strand, and the other is from copy $n\in\{1,\ldots,N\}$.
No box top of copy $n$ is higher than the top of the key box of the copy, by definition of quasicap.

In case $n=N$, this means no box of copy $n$ extends above the bottom of the $u_{N+1}$-tall box, as a consequence of the proof of Lemma~\ref{suppouterlast}.
So no box of the copy meets the $u_{N+1}$-tall box in a set of positive area.
Boxes of copy $n$ are kept away from the remaining boxes of the outer strand by definition of $\mu_n.$

In cases $n=1,\ldots,N-1,$ no box of copy $n$ extends above the bottom of the $u_{n+2}$-tall box, as a consequence of the proof of Lemma~\ref{suppouter}.
Boxes of copy $n$ do not meet the $u_n$- or $u_{n+1}$-tall boxes, which are higher than the $u_{n+2}$-tall box by definition of the new object.
They are kept away from the remaining boxes of the outer strand by definition of $\mu_n$.
\end{proof}

\begin{lemma}
Except for the twin 1-tall boxes, no box of the new object has its top higher than that of the key box.
\end{lemma}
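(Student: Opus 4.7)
The plan is to show that every non-twin box of the new object has its top at depth at least $r - u_2 = r - (\theta+\delta)$, which is precisely the depth of the top of the new key box (the $u_2$-tall box of the outer strand, whose bottom sits at depth $r$ by construction). I would partition the non-twin boxes into two families: the remaining outer-strand boxes (heights $u_3, \ldots, u_{N+1}$) and the boxes pasted in via the copies $1, \ldots, N$.

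For the outer-strand part I would just read off the construction. The $u_n$-tall box has its bottom at depth $r + u_3 + \cdots + u_n$, hence its top at depth $r + u_3 + \cdots + u_{n-1} \geq r$ for $n \geq 3$. Since $u_2 \geq 1 > 0$, this exceeds $r - u_2$, so all these tops lie strictly below the key-box top.

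For the copies I would rely on the corresponding property of the old quasicap: every non-twin box there has its top at depth at least $r - \theta$ (the depth of the top of the old key box). I would then transport this bound via $\varphi_n$. The vertical map $\varphi_n(y) = (u_{n+1} - u_n)y + ru_n$ has slope $u_{n+1} - u_n$, which is strictly positive for $n = 1, \ldots, N-1$: for $n = 1$ because $u_2 = \theta+\delta > 1 = u_1$ (using $\theta \geq 1$ and $\delta > 0$), and for $n = 2, \ldots, N-1$ by the hypothesis $u_2 < u_3 < \cdots < u_N$. Consequently $\varphi_n$ is depth-increasing, and by Lemma~\ref{imagetop} every box top in copy $n$ lands at depth at least $\varphi_n(r-\theta) = r + u_3 + \cdots + u_{n+2} \geq r + u_3 > r > r - u_2$. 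The case of copy $N$ is identical since $\varphi_N = \varphi_{N-1}$, giving the same bound via $\varphi_{N-1}(r-\theta)$.

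The argument is essentially bookkeeping and I do not expect a real obstacle. The one spot requiring care is the strict positivity of the slopes $u_{n+1} - u_n$: this is exactly what the monotonicity chain $u_0 = u_1 \leq u_2 < \cdots < u_N$ in the hypothesis of Proposition~\ref{p3} supplies, with $\delta > 0$ ensuring the first step $u_2 > u_1$ as well.
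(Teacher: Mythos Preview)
Your proposal is correct and follows essentially the same approach as the paper, which proves this lemma as a one-line consequence of the proof of Lemma~\ref{nooverlap} (where the same depth bounds for the copy boxes, via Lemma~\ref{imagetop} and Lemma~\ref{suppouterlast}, are already established). You are more explicit than the paper about the positivity of the slope $u_{n+1}-u_n$ of $\varphi_n$, a point the paper leaves implicit here but invokes elsewhere (e.g.\ in the proof of Lemma~\ref{sparse}).
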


\begin{proof}
This is a consequence of the proof of Lemma~\ref{nooverlap}.
\end{proof}

\begin{lemma}\label{sparseouter}
The sparseness condition holds for each box of the outer strand.
\end{lemma}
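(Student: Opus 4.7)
The plan is, for each outer strand box, to establish the sparseness inequality $d \ge rw$ where $d$ is the cone depth and $w$ is the maximum, over all vertical lines through the cone top, of the total length of box intersected by that line. I aim to show that $w = u_n$ in every case, so that the inequality follows at once from the prescribed cone depth.

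The key step is choosing $J_{u_n} \subseteq I_{u_n}$ so that the vertical strip above $J_{u_n}$, from the cap top down to the cone of $u_n$, contains no other box. The construction should allow this: the outer strand is laid as a staircase down the right side, so within $I_{u_n}$ one can pick a sub-interval $J_{u_n}$ lying horizontally outside every earlier interval $I_{u_m}$ for $m<n$; and each pasted copy $m$ has been squeezed by the strictly monotone affine map $\mu_m$ into a narrow interval chosen (by definition of the new object) to avoid collision with cones. Thus the only box whose vertical extent crosses the strip above $J_{u_n}$ is $u_n$ itself, so $w = u_n$.

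With $w = u_n$, the sparseness inequality reads $d \ge ru_n$. For $n = 0,1,\ldots,N$, the construction sets $d = ru_n$, and the inequality holds with equality. For $n = N+1$, the construction sets $d = ru_N$, so the inequality becomes $u_N \ge u_{N+1}$, which is precisely the termination condition assumed in Proposition~\ref{p3}. The main obstacle is the geometric step: copy $m$ with $m<n$ does have parts above depth $ru_n$ (its top lies at depth $ru_m<ru_n$), so the cleanness of $J_{u_n}$ genuinely depends on the horizontal avoidance guaranteed by $\mu_m$ rather than on any vertical separation. I expect a short explicit inspection of the horizontal layout in Figure~\ref{f10}, combining the staircase offsets among the $I_{u_m}$'s with the narrowness of each $\mu_m$'s image, to provide a $J_{u_n}$ of positive length and thereby complete the argument.
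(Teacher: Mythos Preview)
Your proposal is correct and follows essentially the same approach as the paper: establish that above the cone of each outer-strand box $u_n$ the only box present is $u_n$ itself (so $w=u_n$), and then read off $d=ru_n\ge rw$ for $n\le N$ and $d=ru_N\ge ru_{N+1}$ for $n=N+1$. The paper's proof simply asserts the ``no other boxes above the cone'' fact as evident from the construction, whereas you spell out more carefully why the copies (via the choice of $\mu_m$) and the other outer-strand boxes are horizontally clear of the cone---but this is elaboration, not a different argument.
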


\begin{proof}
The $u_n$-tall box of the outer strand, $n\in\{0,\ldots,N\}$, 
has its cone placed at depth $ru_n$, and there are no other boxes between cap top and cone top. 
The $u_{N+1}$-tall box has its cone at depth $ru_N \ge ru_{N+1}.$
\end{proof}

\begin{lemma}\label{sparse}
The sparseness condition holds for each box in copy $n\in\{1,\ldots,N-1\}$.
\end{lemma}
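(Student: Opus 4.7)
The plan is to reduce the new sparseness inequality for $B$ to the old sparseness inequality for its preimage $B'$ in the old quasicap, via the affine map $\varphi_n(y)=(u_{n+1}-u_n)y+ru_n$.

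Let $d'$ denote $B'$'s old cone depth and $w'$ the old weight above $B'$'s cone, so $d'\ge rw'$. The new cone depth of $B$ is $\varphi_n(d')=(u_{n+1}-u_n)d'+ru_n$. Multiplying the old inequality by the positive factor $u_{n+1}-u_n$ (positive for $n\in\{1,\ldots,N-1\}$ by hypothesis) and adding $ru_n$ to both sides gives
\[\varphi_n(d') \;\ge\; r\bigl[(u_{n+1}-u_n)w' + u_n\bigr].\]

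Next, decompose the new weight $w$ above $B$'s cone into (a) a contribution from boxes of copy $n$ above $B$ whose extent covers $J_B$, which by vertical scaling equals $(u_{n+1}-u_n)(w'-w_{\text{twin}})$, where $w_{\text{twin}}\in\{0,1,2\}$ counts the old twin $1$-tall boxes (not copied into copy $n$) whose interval covered $J_{B'}$; and (b) a contribution $W_n$ from outer-strand boxes above copy $n$ whose interval covers a vertical line through $J_B$ in the new object. Substituting into the required $\varphi_n(d')\ge rw$ and invoking the displayed inequality reduces the lemma to the single bound
\[W_n \;\le\; u_n + (u_{n+1}-u_n)\,w_{\text{twin}}.\]

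Verifying this bound on $W_n$ is the main obstacle, and is where the geometry of the construction enters. The squeezing $\mu_n$ positions copy $n$ so that, along any vertical line through $J_B$, the outer-strand boxes of the new object that cover that line play exactly the role of the old twin $1$-tall boxes that covered $J_{B'}$: the $u_{n+1}$-tall box directly above $K_n$ supplies at most $u_{n+1}=u_n+(u_{n+1}-u_n)$, accounting for a baseline $u_n$ plus one twin; in the case $w_{\text{twin}}=2$, a single higher outer-strand ancestor of $u_{n+1}$ contributes the second increment $u_{n+1}-u_n$, and Lemma~\ref{sparseouter} forbids any further outer-strand box above that from covering the line. A short case split on the three values $w_{\text{twin}}\in\{0,1,2\}$ then closes the lemma.
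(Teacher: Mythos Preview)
Your reduction via the affine map $\varphi_n$ is exactly the paper's approach, and your displayed inequality $\varphi_n(d')\ge r[(u_{n+1}-u_n)w'+u_n]$ is correct. The gap is in the last paragraph, where you try to bound the outer-strand contribution $W_n$.

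First, the case $w_{\text{twin}}=2$ never occurs. The copied part of the old quasicap is its right side, and the paper's construction places boxes there in exactly two positions relative to the excluded twins: those under the (single) 1-tall twin on that side (the ``1 side''), for which $w_{\text{twin}}=1$, and those not under it (the ``key side''), for which $w_{\text{twin}}=0$. There is no configuration with both twins above a copied box.

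Second, your computation of $W_n$ is not correct as stated. In the new object exactly one outer-strand box sits above any vertical line through copy~$n$: the $u_{n+1}$-tall box above the 1-side image and the $u_n$-tall box above the key-side image. There is no ``higher outer-strand ancestor of $u_{n+1}$'' contributing an increment $u_{n+1}-u_n$; no outer-strand box has that height, and Lemma~\ref{sparseouter} says nothing about which boxes lie above a given line---it only checks sparseness at outer-strand cones. What actually pins down $W_n$ is the horizontal placement $\mu_n$ together with the nesting of the outer-strand intervals, and you have not verified this.

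The paper's proof avoids the inequality route entirely: it splits into the two cases (1 side, key side) and computes $w'$ \emph{exactly}, obtaining $w'=(u_{n+1}-u_n)w+u_n$ in both, which matches the right-hand side of the depth inequality on the nose. Your bound $W_n\le u_n+(u_{n+1}-u_n)w_{\text{twin}}$ is in fact an equality in the two cases that arise, so the argument can be repaired by dropping the spurious third case and verifying directly (from the definition of $\mu_n$ and the outer-strand layout) that the 1-side image lies under $u_{n+1}$ alone and the key-side image under $u_n$ alone.
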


\begin{proof}
The goal is to prove the sparseness condition (as in the definition of rational cap) for a box $B'$ in copy $n$.
Box $B'$ is the image under $\varphi_n$ of a box $B$ in the old quasicap. In the old quasicap, let
$d$ be the distance from cap top to cone top of $B$, and let $w$ be the weight above its cone.
In the new structure, let $d'$ be the distance from cap top to cone top of $B'$, and let $w'$ be the weight above its cone.

The old structure is a quasicap. Sparseness holds for $B$, so $d \ge rw$.
Now 
\begin{eqnarray*}
d' &=& \varphi_n(d) \\
&=& (u_{n+1}-u_{n})d + ru_{n}\\
&\ge& r[(u_{n+1}-u_{n})w + u_{n}]
\end{eqnarray*}
because $u_{n+1}\ge u_{n}.$

Now we compute weight above the cone of $B'$.
First observe the effect of $\varphi_n$ on a single box.
Suppose a box in the old quasicap has top at depth $y$ and bottom at depth $y+\Delta y$.
In the new structure, its image under $\varphi_n$ has top at depth
\[\varphi_n(y) = (u_{n+1} - u_n)y - ru_n\]
and bottom at depth
\[\varphi_n(y + \Delta y) = (u_{n+1}-u_n)(y+ \Delta y) - ru_n.\]
The difference is $(u_{n+1}-u_n)\Delta y.$
That is, the image under $\varphi_n$ is $u_{n+1}-u_n$ times taller than the original.

The weight $w'$ above the cone of $B'$ in the new structure is similarly related to $w$,
for all the boxes of the old quasicap except the twin ones appear in the new structure, stretched by the same factor.
There is also in the new structure a new box of the outer strand contributing weight above.

There are two kinds of box $B$ in the old quasicap to consider: those under the 1-tall box (the \emph{1 side} in Figure~\ref{f09}), and those not (the \emph{key side} in Figure~\ref{f09}).
In case $B$ is on the 1 side, all the weight above its cone except for 1 unit (the twin) is mapped into the new structure,
scaled by a factor of $u_{n+1}-u_n,$ 
and then a $u_{n+1}$-tall box is added.
That is, the weight above the cone of $B'$ in the new structure is
\begin{eqnarray*}
w' &=& (u_{n+1}-u_{n})(w-1) + u_{n+1}\\
&=& (u_{n+1}-u_{n})w + u_{n}.
\end{eqnarray*}
In case $B$ is on the key side, all the weight above its cone is mapped into the new structure, scaled by a factor of $u_{n+1}-u_n,$
and then a $u_{n}$-tall box is added.
That is, the weight above the cone of $B'$ in the new structure is
\begin{eqnarray*}
w' &=& (u_{n+1}-u_{n})w + u_{n},
\end{eqnarray*}
same as when $B$ is on the 1 side.
In both cases, $d' \ge rw'.$
Sparseness holds.
\end{proof}

\begin{lemma}\label{sparseN}
The sparseness condition holds for each box in copy $N$.
\end{lemma}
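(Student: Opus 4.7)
The plan is to mirror the proof of Lemma~\ref{sparse}, with adjustments for the two features that distinguish copy $N$ from copies $1,\ldots,N-1$: the vertical transformation $\varphi_N = \varphi_{N-1}$ (so the stretch factor for copy $N$ equals $u_N - u_{N-1}$), and $\mu_N$ being strictly increasing rather than reversed. I would fix a box $B$ of the old quasicap with cone depth $d$ and weight above $w$, and let $B'$ denote its image in copy $N$. Using $\varphi_N = \varphi_{N-1}$ and $d \ge rw$, the same computation as in the proof of Lemma~\ref{sparse} would yield
\[
d' \;=\; (u_N - u_{N-1})d + r u_{N-1} \;\ge\; r\bigl[(u_N - u_{N-1})w + u_{N-1}\bigr].
\]

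Next, I would compute $w'$ by the same 1-side/key-side case split used in Lemma~\ref{sparse}. Every non-twin box of the old quasicap contributes after vertical scaling by $u_N - u_{N-1}$, and the new outer strand sitting above copy $N$ contributes an additional term. The outer-strand contribution differs from Lemma~\ref{sparse}: because copy $N$'s key box directly supports the $u_{N+1}$-tall box (with no $u_{N+2}$-tall neighbor present), the 1-side case should give
\[
w' \;=\; (u_N - u_{N-1})(w-1) + u_{N+1},
\]
while the key-side case should mirror Lemma~\ref{sparse} with $n$ replaced by $N-1$, giving $w' = (u_N - u_{N-1})w + u_{N-1}$. Substituting into $d' \ge rw'$ would close the key-side case as an identity and reduce the 1-side case to $u_N \ge u_{N+1}$, exactly the termination hypothesis of Proposition~\ref{p3}.

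The hard part will be the geometric verification that the new outer-strand contributions really take the forms above. Since $\mu_N$ is unreversed (unlike $\mu_n$ for $n < N$) and copy $N$ has no $u_{N+2}$-tall support neighbor analogous to the one used in Lemma~\ref{suppouter}, I would need to trace the horizontal layout carefully and confirm that the $u_{N+1}$-tall box above copy $N$'s 1 side plays exactly the role that the $u_{n+1}$-tall box played above copy $n$'s 1 side in Lemma~\ref{sparse}, and that on the key side the relevant outer-strand weight does not exceed $u_{N-1}$. Once that geometric detail is settled, the termination hypothesis $u_N \ge u_{N+1}$ is precisely what is needed to close the sparseness inequality on the 1 side.
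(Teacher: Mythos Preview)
Your approach is correct but takes a different route from the paper's. The paper does not redo the computation of Lemma~\ref{sparse}; instead it argues by direct comparison of copy $N$ with copy $N-1$. Since $\varphi_N=\varphi_{N-1}$, corresponding cones in the two copies have the same depth $d'$, so it suffices to check that each cone in copy $N$ carries no more outer-strand weight than the corresponding cone in copy $N-1$. The paper observes that on one side the $u_N$-tall outer-strand box over copy $N-1$ is replaced by the $u_{N+1}$-tall box over copy $N$ (and $u_{N+1}\le u_N$), while on the other side the $u_{N-1}$-tall box over copy $N-1$ is replaced by nothing at all over copy $N$. Sparseness for copy $N$ then follows immediately from Lemma~\ref{sparse} applied to copy $N-1$, with no need to reopen the inequality $d'\ge rw'$.

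Your direct recomputation also works, and the case split and use of $u_N\ge u_{N+1}$ are exactly right. One small inaccuracy: on the key side of copy $N$ the outer-strand contribution is actually $0$, not $u_{N-1}$ (copy $N$ is outermost, so nothing from the outer strand sits over that side); your inequality still goes through since you only need an upper bound, and your hedging phrase ``does not exceed $u_{N-1}$'' already covers this. The paper's comparison argument has the advantage of sidestepping the ``hard part'' you flag: because it piggybacks on the already-verified sparseness of copy $N-1$, there is no need to trace the horizontal layout of copy $N$ independently.
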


\begin{proof}
Refer to Figure~\ref{f10} and compare copy $N$ to copy $N-1,$ as $\varphi_N = \varphi_{N-1}.$
Distances from cap top to cone top are equal in corresponding cones of the two copies.
Now consider weight above.
It is the same in the two copies except for contributions of outer-strand boxes.
Each cone of copy $N$ either has a $u_{N+1}$-tall outer-strand box above, while the corresponding cone of copy $N-1$ has a $u_N$-tall box;
or it has no outer-strand box above, while the corresponding cone has a $u_{N-1}$-tall box.
Because $u_{N+1} \le u_N$, in both cases there is no more weight above cones of copy $N$ than above corresponding cones of copy $N-1.$
\end{proof}

The proof of Proposition~\ref{p3} is complete.
New lower bounds on $R$ are available.
For example, let $r=4.5.$
To start a sequence of quasicaps, we construct a $1$-quasicap.
This is easy. We put a 1-tall box in the key position and a cone directly beneath it.
If $(\theta, \delta)= (1, 0.8)$, then sequence $u_0, u_1, u_2, \ldots$ begins 
\[1 = 1 \le \frac{9}{5} < \frac{14}{5} < \frac{43}{10} < \frac{25}{4} < \frac{333}{40} < \frac{737}{80} \ge \frac{829}{160},\]
so by Proposition~\ref{p3} there exists a $1.8$-quasicap.
If $(\theta, \delta) = (1.8, 0.7)$, then sequence $u_0, u_1, u_2, \ldots$ begins
\[1 = 1 \le \frac{5}{2} < \frac{81}{20} < \frac{1377}{200} < \frac{20889}{2000} < \frac{294273}{20000} < \frac{3586761}{200000} \ge \frac{32757777}{2000000},\]
so by Proposition~\ref{p3} there exists a $2.5$-quasicap.
This is a $4.5$-cap, so by Proposition~\ref{tcap} we have $4.5 \le R.$
In the next section we show that the construction works for all $r<5.$

\section{The special sequence}\label{rseq}

Sequence $u_n$ of Section~\ref{s3} is a linear homogeneous recursive sequence. 
So is its sequence of differences.
The ordinary power series generating function 
\[f(x)=\sum_{n\ge 0}(u_{n+1}-u_n)x^n\]
of the difference sequence is
(cf.\ chapter 4 of \cite{Stanley})
\[f(x) =  \frac{p(x)}{q(x)},\] 
where 
$q(x)$ is a polynomial whose coefficients are those of the recurrence,
and $p(x)$ is a polynomial of lesser degree related to boundary values of the sequence.
Specifically,
\[p(x) = x[(\theta+\delta)(1-x)-1]\]
and
\begin{eqnarray}
q(x) &=& 1-(r-\theta)x+(r-2\theta)x^2+\theta x^3 \label{Q1}\\
&=& 1+rx(x-1)+\theta x(x-1)^2 \label{Q2} \\
&=& (1-x/\alpha)(1-x/\beta)(1-x/\gamma) \label{Q3}.
\end{eqnarray}
Form~\eqref{Q3} gives names $\alpha, \beta, $ and $\gamma$
to the (complex) roots of $q(x).$
They affect the asymptotic behavior of the sequence.
When they are distinct,
\[f(x) = \frac{A}{1-x/\alpha} + \frac{B}{1-x/\beta} + \frac{C}{1-x/\gamma}\]
and \begin{equation}\label{foob} 
u_{n+1}-u_n = A\alpha^{-n}+B\beta^{-n}+C\gamma^{-n}
\end{equation}
for some complex numbers $A,B,C.$ Using \eqref{Q3},
\[q(x)f(x) = A(1-x/\beta)(1-x/\gamma) + B(1-x/\alpha)(1-x/\gamma) + C(1-x/\alpha)(1-x/\beta).\]

\begin{prop}\label{roots}
When $1 \le \theta \le 0.5r \le 3\theta,$ we can assume $\alpha<-1$
and $0<\beta\gamma<1.$
\end{prop}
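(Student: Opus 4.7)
The plan is to locate a real root of $q(x)$ that lies strictly below $-1$ and then to control the product of the other two roots via a single Vieta-type relation. Matching the coefficient of $x^3$ between forms~\eqref{Q1} and \eqref{Q3} yields the identity $\alpha\beta\gamma = -1/\theta$. With this in hand, the lower bound $\beta\gamma>0$ reduces to showing $\alpha<0$, and the upper bound $\beta\gamma<1$ reduces (using $\theta\ge 1$) to the stronger assertion $\alpha<-1$.

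For the first step I would evaluate $q$ at $x=-1$ directly from form~\eqref{Q1}, obtaining
\[q(-1)=1+(r-\theta)+(r-2\theta)-\theta=1+2r-4\theta.\]
The hypothesis $\theta\le r/2$ makes this at least $1$, hence strictly positive. Because the leading coefficient $\theta$ is positive, $q(x)\to-\infty$ as $x\to-\infty$, so the intermediate value theorem supplies a real root strictly less than $-1$; choose $\alpha$ to be any such root.

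For the second step, substitute into the identity to get $\beta\gamma = -1/(\theta\alpha)$. Since $\alpha<0$ and $\theta>0$ this is positive, and since $\theta\ge 1$ and $|\alpha|>1$ we have $\theta|\alpha|>1$, which makes $|\beta\gamma|<1$. The main subtlety, rather than any computational obstacle, is that $q$ may possess three real roots, in which case the labeling of the roots has content; the wording ``we can assume'' in the proposition signals the freedom to designate $\alpha$ as any root below $-1$, and the argument above supplies at least one. The third hypothesis $r\le 6\theta$ is not needed for either claim above and so I would not invoke it here; I expect it to enter in the subsequent analysis where finer control on $\beta$ and $\gamma$ individually (their moduli, signs, or conjugacy) is required for the asymptotic use of~\eqref{foob}.
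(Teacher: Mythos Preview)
Your proof is correct and follows essentially the same route as the paper: use the intermediate value theorem with $q(-1)=1+2r-4\theta>0$ to locate a real root below $-1$, then read off $\alpha\beta\gamma=-1/\theta$ from the cubic coefficient to bound $\beta\gamma$. The one minor difference is that the paper establishes the sign change by evaluating at a specific point, $q(-10)=1+110r-1210\theta<0$, a computation that actually uses the hypothesis $0.5r\le 3\theta$; your appeal to the leading coefficient $\theta>0$ and $q(x)\to-\infty$ as $x\to-\infty$ is cleaner and, as you correctly observe, makes that third hypothesis superfluous for this proposition.
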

\begin{proof}
The first conclusion follows from the intermediate value theorem and
\eqref{Q2}:
\begin{eqnarray*}
q(-1) &=& 1+2r -4\theta >0\\
q(-10) &=& 1 + 110 r -1210 \theta <0.
\end{eqnarray*}
For the second conclusion, compare
cubic terms of \eqref{Q1} and \eqref{Q3}:
\[-\alpha\beta\gamma =\theta^{-1}\le 1.\]
\end{proof}
For the rest of the section we restrict $r$ and $\theta$:
\[4.999\le r\le 5\]
\[1 \le \theta \le 2.2.\]
So Proposition~\ref{roots} applies.
This simplifies our calculations.
We may ignore $\alpha$ in the asymptotic analysis because the corresponding term of \eqref{foob} vanishes.

\begin{prop}\label{realg}
If $\theta\le 2.13,$ then $\beta$ and $\gamma$ are real, and
we can assume $0<\gamma<0.56<\beta<1.$
\end{prop}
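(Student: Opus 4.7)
The plan is to invoke the intermediate value theorem for $q$ at four carefully chosen real sample points. From Proposition~\ref{roots} we already have one real root $\alpha<-1$; what remains is to exhibit two additional real roots of $q$ in $(0,1)$, with $0.56$ separating them. Once three real roots of a real cubic are found, they must be all of its roots, so $\beta$ and $\gamma$ will automatically be real.

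First I would record the easy evaluations $q(0)=1$ and, via form \eqref{Q1},
\[q(1)=1-(r-\theta)+(r-2\theta)+\theta=1.\]
The crux is to show $q(0.56)<0$ on the parameter box $4.999\le r\le 5,$ $1\le\theta\le 2.13$. Expanding \eqref{Q1} at $x=0.56$ (using $0.56^2=0.3136$ and $0.56^3=0.175616$) gives
\[q(0.56)=1-0.2464\,r+0.108416\,\theta.\]
This is decreasing in $r$ and increasing in $\theta$, so on the parameter box it attains its maximum at $(r,\theta)=(4.999,2.13)$, where it evaluates to roughly $-8.3\times 10^{-4}$, which is negative.

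With the sign pattern $q(0)>0$, $q(0.56)<0$, $q(1)>0$ in hand, the intermediate value theorem produces a real root of $q$ in $(0,0.56)$ and another in $(0.56,1)$; together with $\alpha<-1$ from Proposition~\ref{roots} these exhaust the three roots of $q$. Labeling the root in $(0,0.56)$ as $\gamma$ and the root in $(0.56,1)$ as $\beta$ gives the required conclusion. The only obstacle I anticipate is numerical rather than conceptual: the margin in $q(0.56)<0$ is smaller than $10^{-3}$, which is presumably why the hypothesis must be sharpened from $\theta\le 2.2$ to $\theta\le 2.13$ and why the proposition pins the separator at $0.56$ rather than something rounder such as $0.5$ (at which $q$ turns out to remain positive, namely $q(0.5)=1-0.25r+0.125\theta$).
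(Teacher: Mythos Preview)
Your argument is correct and is essentially the paper's own proof: evaluate $q$ at $0$, $0.56$, and $1$, note the sign change at $0.56$ via the linear expression $q(0.56)=1-0.2464\,r+0.108416\,\theta$ maximized at $(r,\theta)=(4.999,2.13)$, and apply the intermediate value theorem. The only cosmetic difference is that the paper computes $q(0.56)$ from form~\eqref{Q2} rather than~\eqref{Q1}, arriving at the same numerical bound.
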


\begin{proof}
By the intermediate value theorem.
Using \eqref{Q2},
\[q(0) = q(1) =1>0\] and 
\[q(0.56) \le 1-4.999\cdot 0.56 \cdot 0.44 + 2.13\cdot 0.56\cdot 0.44^2<0.\]
\end{proof}

\begin{prop}\label{real}
If $\beta$ and $\gamma$ are real, and
$\alpha<-1<0<\gamma<\beta<1,$
then $u_{n+1}-u_n\to -\infty$ when \[\theta+\delta<\frac{1}{1-\gamma}.\]
\end{prop}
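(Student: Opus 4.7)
The plan is to extract from \eqref{foob} the dominant term and show that its coefficient is negative under the hypothesis, so that $u_{n+1}-u_n \to -\infty$.

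First I would identify $\gamma^{-n}$ as the fastest-growing basis function. Since $|\alpha|>1$ we have $\alpha^{-n}\to 0$; and since $0<\gamma<\beta<1$ we have $1<\beta^{-1}<\gamma^{-1}$, so the ratios $\alpha^{-n}/\gamma^{-n}$ and $\beta^{-n}/\gamma^{-n}$ both tend to $0$. Rewriting \eqref{foob} as
\[ u_{n+1}-u_n \;=\; \gamma^{-n}\bigl(C + B(\beta/\gamma)^{-n} + A(\alpha/\gamma)^{-n}\bigr), \]
we see that the limiting behavior is governed by the sign of $C$: if $C<0$, the right-hand side tends to $-\infty$.

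Next I would compute $C$ by the standard residue/partial-fractions trick. Multiplying $f(x)=p(x)/q(x)$ by $1-x/\gamma$, using factorization \eqref{Q3}, and evaluating at $x=\gamma$ gives
\[ C \;=\; \frac{p(\gamma)}{(1-\gamma/\alpha)(1-\gamma/\beta)}. \]
The sign of $C$ then falls out of the assumed root ordering. Because $\alpha<0<\gamma$, the factor $1-\gamma/\alpha$ exceeds $1$; because $0<\gamma<\beta<1$, the factor $1-\gamma/\beta$ lies in $(0,1)$. Hence the denominator is positive. For the numerator, $p(\gamma)=\gamma[(\theta+\delta)(1-\gamma)-1]$, with $\gamma>0$ and $1-\gamma>0$, so the hypothesis $\theta+\delta<1/(1-\gamma)$ is exactly the condition $p(\gamma)<0$. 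Therefore $C<0$, and combining with the dominance observation yields $u_{n+1}-u_n\to -\infty$.

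I do not foresee a genuine obstacle here. The one point that needs care is ensuring the three simple poles in the partial-fraction expansion are valid, i.e.\ that $\alpha,\beta,\gamma$ are distinct; this is supplied by the strict ordering $\alpha<-1<0<\gamma<\beta<1$ in the hypothesis. All other steps are short sign checks on the explicit formulas for $p$ and $q$ from Section~\ref{s3}.
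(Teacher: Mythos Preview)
Your proposal is correct and follows essentially the same route as the paper: both arguments isolate $C\gamma^{-n}$ as the dominant term in \eqref{foob}, compute $C$ by substituting $x=\gamma$ into the partial-fraction identity $p(x)=A(1-x/\beta)(1-x/\gamma)+B(1-x/\alpha)(1-x/\gamma)+C(1-x/\alpha)(1-x/\beta)$, and then read off $C<0$ from the positivity of $(1-\gamma/\alpha)(1-\gamma/\beta)$ together with $p(\gamma)=\gamma[(\theta+\delta)(1-\gamma)-1]<0$. Your write-up is slightly more explicit about why $\gamma^{-n}$ dominates and about distinctness of the roots, but the substance is identical.
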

\begin{proof}
In this case \[u_{n+1}-u_n \sim  C\gamma^{-n},\]
and the desired conclusion follows when $C<0$.
Comparing two expressions of $q(x)f(x)$,
\begin{eqnarray}
x[(\theta+\delta)(1-x)-1] &=& A(1-x/\beta)(1-x/\gamma) + \label{coeffs}\\ 
&& B(1-x/\alpha)(1-x/\gamma) + \notag \\
&& C(1-x/\alpha)(1-x/\beta). \notag
\end{eqnarray}
Substituting $\gamma$ for $x,$
\[\gamma[(\theta+\delta)(1-\gamma)-1] = C(1-\gamma/\alpha)(1-\gamma/\beta).\]
Because \[0<(1-\gamma/\alpha)(1-\gamma/\beta),\]
we have $C<0$ when \[(\theta+\delta)(1-\gamma)-1 <0.\]
\end{proof}

\begin{prop}\label{decreasing}
If $\theta\le 2.13,$ then
$\gamma$ decreases strictly in $r$, and so does $\displaystyle \frac{1}{1-\gamma}-\theta.$
\end{prop}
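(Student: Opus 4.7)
The plan is to obtain the first conclusion by implicit differentiation of the defining equation $q(\gamma) = 0$, treating $\gamma$ as a function of $r$ with $\theta$ held fixed, and then derive the second conclusion as an immediate corollary.

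Take $q$ in form~\eqref{Q2}, namely $q(x) = 1 + r x(x-1) + \theta x(x-1)^2$. Differentiating $q(\gamma(r)) = 0$ in $r$ yields
$$\frac{d\gamma}{dr} \;=\; -\frac{\gamma(\gamma-1)}{q'(\gamma)}.$$
By Proposition~\ref{realg}, $\gamma \in (0,0.56)$, so the numerator $\gamma(\gamma-1)$ is strictly negative. Hence $d\gamma/dr < 0$ will follow as soon as I show $q'(\gamma) < 0$.

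For the sign of $q'(\gamma)$ I would use the location of $\gamma$ among the three roots of $q$. Combining Propositions~\ref{roots} and~\ref{realg} orders the roots as $\alpha < -1 < 0 < \gamma < 0.56 < \beta < 1$; in particular the three are distinct, so $\gamma$ is a simple root and $q'(\gamma) \neq 0$. To pin down the sign, I would reuse the computations from Proposition~\ref{realg}: $q(0) = 1 > 0$ while $q(0.56) < 0$. Since $\gamma$ is the unique root of $q$ in $(0,0.56)$, $q$ passes from positive to negative as $x$ crosses $\gamma$, so $q'(\gamma) < 0$. This is the one place the argument is not purely formal; everything else is routine.

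With $d\gamma/dr < 0$ established and $\theta$ independent of $r$, the second assertion is immediate:
$$\frac{d}{dr}\!\left[\frac{1}{1-\gamma} - \theta\right] \;=\; \frac{1}{(1-\gamma)^2}\,\frac{d\gamma}{dr} \;<\; 0,$$
because $(1-\gamma)^{-2} > 0$. So both $\gamma$ and $\frac{1}{1-\gamma} - \theta$ strictly decrease in $r$, as claimed.
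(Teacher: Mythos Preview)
Your argument is correct. The paper takes a different, more elementary route: rather than implicit differentiation, it fixes $4.999 \le r_0 < r_1 \le 5$, observes directly from form~\eqref{Q2} that $q_1(x) = q_0(x) + (r_1-r_0)x(x-1) < q_0(x)$ on $(0,1)$, and in particular $q_1(\gamma_0) < q_0(\gamma_0) = 0$; since $q_1 \ge 0$ on $[0,\gamma_1]\cup[\beta_1,1]$, this forces $\gamma_1 < \gamma_0$. That approach avoids any appeal to the implicit function theorem or to differentiability of $\gamma$ in $r$, relying only on the intermediate value theorem. Your calculus argument is equally valid and perhaps more systematic (it would adapt readily to other parameter dependencies), but it does lean implicitly on the implicit function theorem to justify treating $\gamma$ as a differentiable function of $r$ --- a step that is safe here because you have verified $q'(\gamma) \ne 0$ throughout the parameter range, though you might say so explicitly. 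The second conclusion is handled identically in both proofs.
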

\begin{proof}
Suppose \[4.999\le r_0<r_1\le 5.\] 
Using \eqref{Q2}
for $j\in\{0,1\},$ 
\[q_j(x) := 1+r_j x(x-1)+\theta x(x-1)^2\]
has roots $0<\gamma_j<\beta_j<1$ by Proposition~\ref{realg}.
When $\gamma_0 \le x \le \beta_0,$ we have  
\[0\ge q_0(x)=1+r_0 x(x-1) +\theta x(x-1)^2\]
and
\[0 > (r_1-r_0) x(x-1).\] Adding the last two inequalities,
\[0 > 1+r_1 x(x-1) +\theta x(x-1)^2=q_1(x).\]
Specifically, $q_1(\gamma_0)<0$.
Because $q_1(x)\ge 0$ for $x\in [0,\gamma_1]\cup [\beta_1,1],$ it follows 
that $\gamma_1<\gamma_0.$ That is, $\gamma$ decreases in $r.$
The second conclusion follows.
\end{proof}

\begin{prop}\label{pos}
If $\theta\le 2.13$ and $r=5,$ then $\displaystyle \theta\le \frac{1}{1-\gamma}.$
\end{prop}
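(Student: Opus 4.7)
The plan is to rewrite the desired inequality as a lower bound on $\gamma$ and then verify that lower bound by plugging it into $q$. Since $0 < \gamma < 1$ (Proposition~\ref{realg}) and $\theta \ge 1$, the inequality $\theta \le 1/(1-\gamma)$ is equivalent to $\gamma \ge 1 - 1/\theta$. So set $x_0 := 1 - 1/\theta = (\theta-1)/\theta$; the goal is to show $\gamma \ge x_0$.

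By Proposition~\ref{realg}, the cubic $q$ satisfies $q(0) = q(1) = 1$, is zero exactly at $\alpha < -1 < 0 < \gamma < \beta < 1$, and hence (being a cubic with positive leading coefficient) is nonnegative on $[0,\gamma] \cup [\beta,1]$ and negative on $(\gamma,\beta)$. Therefore a point $x_0 \in [0,1]$ lies in $[0,\gamma]$ if and only if $q(x_0) \ge 0$ \emph{and} $x_0 < \beta$. This reduces the proof to two computations: evaluate $q(x_0)$ with $r=5$, and separate $x_0$ from $\beta$.

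For the evaluation, form \eqref{Q2} is the right one to use, since it is naturally expressed in $x(x-1)$. Here $x_0 - 1 = -1/\theta$, so $x_0(x_0-1) = (1-\theta)/\theta^2$ and $x_0(x_0-1)^2 = (\theta-1)/\theta^3$. Substituting into $q(x_0) = 1 + 5x_0(x_0-1) + \theta x_0(x_0-1)^2$ collapses (pleasantly) to
\[
q(x_0) = \frac{\theta^2 - 4\theta + 4}{\theta^2} = \frac{(\theta - 2)^2}{\theta^2} \ge 0.
\]
For the separation from $\beta$, since $\theta \le 2.13$ we have $x_0 = 1 - 1/\theta \le 1 - 1/2.13 < 0.56 < \beta$, where the last inequality is part of Proposition~\ref{realg}. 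Combining, $x_0 \in [0,\gamma]$, which gives $\gamma \ge 1 - 1/\theta$ as required.

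The main obstacle is really psychological rather than technical: one has to guess the right candidate $x_0$ and trust that the algebra will cooperate. The payoff is that $q(1-1/\theta)$ turns out to be a perfect square at $r=5$, which is precisely the borderline behavior one expects since Propositions~\ref{real} and \ref{decreasing} are being set up to show that $r=5$ is the exact limit of the construction. Once the substitution is carried out, every remaining step is either an arithmetic inequality or a direct appeal to Proposition~\ref{realg}.
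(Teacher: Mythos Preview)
Your proof is correct and follows essentially the same approach as the paper: evaluate $q(1-\theta^{-1})$ via form \eqref{Q2} to obtain a perfect square (the paper writes it as $(1-2\theta^{-1})^2$, which is your $(\theta-2)^2/\theta^2$), then use Proposition~\ref{realg} and the bound $1-1/2.13<0.56<\beta$ to place $1-\theta^{-1}$ on the $\gamma$-side rather than the $\beta$-side.
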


\begin{proof}
Using \eqref{Q2},
\[q(1-\theta^{-1}) = 1+5(1-\theta^{-1})(-\theta^{-1})+\theta(1-\theta^{-1})\theta^{-2}=  \left(1-2\theta^{-1}\right)^2\ge 0.\]
By Proposition~\ref{realg} and the surrounding discussion, $1-\theta^{-1}\le \gamma$ or 
$1-\theta^{-1} \ge \beta > 0.56.$ The latter contradicts the hypothesis $\theta \le 2.13$.
So $1-\theta^{-1}\le \gamma.$
\end{proof}

\begin{prop}\label{last}
If $\theta=2.13,$ then $\displaystyle \frac{1}{1-\gamma}-\theta > 0.04.$
\end{prop}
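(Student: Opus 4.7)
The plan is to pick an explicit point $x_0$ slightly above the lower bound $1 - 1/\theta$ from Proposition~\ref{pos}, verify $q(x_0) > 0$, and conclude $\gamma > x_0$ by sign analysis of the cubic. I would take $x_0 = 1 - 1/2.17 = 117/217$, engineered so that $\frac{1}{1-x_0} = 2.17 = \theta + 0.04$; showing $\gamma > x_0$ then immediately yields the conclusion.

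First I reduce to the extremal case $r = 5$. By Proposition~\ref{decreasing}, the quantity $\frac{1}{1-\gamma} - \theta$ is strictly decreasing in $r$ on the admissible range $[4.999, 5]$, so it suffices to verify the inequality at $r = 5$ (with $\theta = 2.13$ fixed).

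Next I evaluate $q(x_0)$ at $r = 5$, $\theta = 2.13$ using form~\eqref{Q2}: writing $x_0 - 1 = -100/217$ and clearing the denominator $217^3$ turns $q(x_0)$ into the integer computation
\[217^3 \cdot q(x_0) = 10218313 - 12694500 + 2492100 = 15913 > 0.\]
The cubic sign pattern then finishes the job. By Proposition~\ref{realg} the roots satisfy $\alpha < -1 < 0 < \gamma < 0.56 < \beta < 1$, and since $q$ has positive leading coefficient $\theta$, $q$ is positive on $(\alpha, \gamma)$ and negative on $(\gamma, \beta)$. Because $0 < x_0 < 0.56 < \beta$, the positivity of $q(x_0)$ forces $x_0 < \gamma$. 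Thus $\gamma > 1 - 1/2.17$, giving $\frac{1}{1-\gamma} > 2.17 = \theta + 0.04$.

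The main obstacle I expect is the narrowness of the margin: $q(x_0)$ is positive but only by about $0.0016$, so the constant $0.04$ is essentially the best this method can give at $\theta = 2.13$, and the computation must be carried out in exact rational arithmetic (with $\theta = 213/100$ and $x_0 = 117/217$) rather than with decimal approximations, in order to rule out round-off error.
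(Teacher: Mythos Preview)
Your proof is correct and follows essentially the same approach as the paper's: reduce to $r=5$ via Proposition~\ref{decreasing}, evaluate $q$ at a rational test point below $0.56$, and use the sign pattern of the cubic from Proposition~\ref{realg} to conclude $\gamma$ exceeds the test point. The only difference is cosmetic: the paper uses the test point $0.54$ (so $\frac{1}{1-\gamma}>\frac{1}{0.46}>2.17$), whereas you engineered $x_0=117/217$ so that $\frac{1}{1-x_0}=2.17$ exactly.
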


\begin{proof}
Proposition~\ref{decreasing} implies that $\gamma$ is least when $r=5$. 
Evaluate $q(0.54)$ there (using \eqref{Q2}) to obtain a lower bound for 
$\gamma$:
\[q(0.54) = 1-5\cdot 0.54\cdot 0.46 + 2.13\cdot 0.54\cdot 0.46^2 >0.\]
So $\gamma > 0.54$ and $\displaystyle \frac{1}{1-\gamma}-\theta > 0.04.$
\end{proof}

The discriminant $D$ of $q(x)$ is (cf.\ pp.\ 95-102 of \cite{Rotman})
\[D = -27\theta^2 -4\theta^3 +6\theta^2r +6 \theta r^2
+\theta^2 r^2 -4r^3 -2\theta r^3 +r^4,\]
and $D<0$ if and only if $\text{Im}[\gamma]\text{Im}[\beta]\ne 0.$
\begin{prop}\label{imaginary}
If $\theta=2.15,$ then $D<0.$
\end{prop}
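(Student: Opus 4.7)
The plan is to substitute $\theta = 2.15$ into the formula
\[D = -27\theta^2 -4\theta^3 +6\theta^2 r +6 \theta r^2 +\theta^2 r^2 -4r^3 -2\theta r^3 +r^4,\]
obtaining a quartic polynomial in $r$ alone, and then verify that it is negative throughout the standing range $4.999 \le r \le 5$.

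First I would evaluate $D$ at the endpoint $r=5$. Writing $\theta = 43/20$ and carrying out the arithmetic in exact rationals eliminates any rounding concern; the result is a small negative number (numerically $\approx -0.32$), so $D(2.15,5) < 0$.

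Second, I would show that $D(2.15, r)$ is strictly increasing in $r$ on $[4.999, 5]$. Differentiating with respect to $r$ gives
\[\frac{\partial D}{\partial r} = 6\theta^2 + 12\theta r + 2\theta^2 r - 12 r^2 - 6\theta r^2 + 4 r^3,\]
and a direct evaluation at $\theta = 2.15$, $r=5$ yields a value of about $80$. Since this is a cubic in $r$ with small coefficients and the interval $[4.999, 5]$ has length only $10^{-3}$, a crude estimate (e.g.\ a trivial bound on $\partial^2 D/\partial r^2$ combined with the mean value theorem) shows $\partial D/\partial r$ stays positive throughout the interval. Consequently $D(2.15, r) \le D(2.15, 5) < 0$ for every admissible $r$.

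The main obstacle is precision: $|D(2.15,5)|$ is only of order $0.3$, so one must carry enough digits to be sure of the sign. This is entirely disposed of by working in exact rationals (denominators dividing $20^4$), and the rest of the verification is routine bookkeeping.
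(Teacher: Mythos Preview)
Your proposal is correct and follows essentially the same route as the paper: show that $\partial D/\partial r>0$ on the standing interval (the paper does this for all $\theta\ge 2.1$ via term-by-term bounds, you do it at $\theta=2.15$ via a pointwise evaluation plus a tiny-interval perturbation argument), then conclude $D\le D(2.15,5)<0$ from the endpoint computation. The only cosmetic difference is how the positivity of the derivative is certified; the structure of the argument is the same.
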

\begin{proof}

If $\theta \ge 2.1,$ then $D$ increases in $r$:
\begin{eqnarray*}
\frac{dD}{d r} &=& 6\theta^2+12\theta r +2\theta^2 r -12r^2 -6\theta r^2 +4r^3\\
&\ge& 6\cdot 2.1^2 +12\cdot 2.1\cdot 4.9 + 2\cdot 2.1^2\cdot 4.9-12\cdot 5^2 - 6\cdot 2.2\cdot 5^2+ 4\cdot 4.9^3 > 0.
\end{eqnarray*}
So $D$ is greatest when $r=5.$
\begin{eqnarray*}
D &=& -27\cdot 2.15^2 -4\cdot 2.15^3 +6\cdot 2.15^2\cdot 5 + 6\cdot 2.15\cdot 5^2 +\\
&&2.15^2\cdot 5^2 - 4\cdot 5^3 - 2\cdot 2.15\cdot 5^3 + 5^4 <0.
\end{eqnarray*}

\end{proof}

\begin{prop}\label{complex}
If $D<0$ and $|\alpha|>1>|\beta|=|\gamma|>0,$ then $u_N\ge u_{N+1}$ 
for some $N>0.$
\end{prop}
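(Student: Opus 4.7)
My plan is to use the closed form \eqref{foob} for $v_n := u_{n+1}-u_n$. Since $D<0$, the roots $\beta$ and $\gamma$ are complex conjugates; write $\beta=\rho e^{i\phi}$ with $0<\rho<1$ and $\phi\in(0,\pi)$ (the latter because $\beta$ is non-real). Because $v_n\in\mathbb{R}$, realness forces $A\in\mathbb{R}$ and $C=\overline{B}$, so
\[v_n = A\alpha^{-n} + 2|B|\rho^{-n}\cos(n\phi-\arg B).\]
The term $A\alpha^{-n}$ is bounded because $|\alpha|>1$; the factor $\rho^{-n}$ tends to $\infty$ because $\rho<1$. The strategy is to show $\cos(n\phi-\arg B)\le-\varepsilon$ for some fixed $\varepsilon>0$ and infinitely many $n$; along this subsequence $v_n\to-\infty$, so in particular $v_N<0$ (i.e., $u_N>u_{N+1}$) for some $N>0$.

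Two ingredients are required. First, $B\ne 0$: substituting $x=\beta$ into \eqref{coeffs} gives
\[\beta\bigl[(\theta+\delta)(1-\beta)-1\bigr] = B(1-\beta/\alpha)(1-\beta/\gamma).\]
Since $\beta\ne 0$ and the factors on the right are nonzero (as $\alpha,\gamma\ne\beta$), $B=0$ would force $\beta=1-1/(\theta+\delta)\in\mathbb{R}$, contradicting $\phi\in(0,\pi)$. Second, the trigonometric oscillation claim: if $\phi/\pi$ is irrational, Weyl equidistribution of $\{n\phi\bmod 2\pi\}$ delivers infinitely many $n$ with $\cos(n\phi-\arg B)<-1/2$. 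If $\phi/\pi$ is rational, the orbit $\{n\phi\bmod 2\pi\}$ is a finite cyclic subgroup of order $q\ge 3$ (since $q=1$ requires $\phi=0$ and $q=2$ requires $\phi=\pi$, both excluded). Over one period the cosines sum to $\mathrm{Re}\bigl(e^{-i\arg B}\sum_{n=0}^{q-1}e^{in\phi}\bigr)=0$ because $e^{i\phi}$ is a nontrivial $q$-th root of unity; so the cosines cannot all be non-negative unless all zero, and they cannot all be zero because cosine vanishes only at two angles in $[0,2\pi)$ while the orbit has $q\ge 3$ distinct ones. Thus some orbit value has $\cos<0$, and by periodicity it recurs infinitely often.

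I expect the main obstacle to be the rational case of the oscillation claim, where one must rule out the pathological possibility that every orbit cosine lies in $[0,1]$; the zero-sum identity paired with the lower bound $q\ge 3$ closes this gap, and the remainder is routine asymptotic analysis.
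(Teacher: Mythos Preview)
Your proof is correct and follows the same overall approach as the paper: express $u_{n+1}-u_n$ via the partial-fraction formula, note that the $\alpha$-term decays while the complex-conjugate pair contributes an oscillating term of growing modulus, and conclude that the difference is eventually negative. Your argument is in fact more complete than the paper's: you explicitly verify $B\ne 0$ (which the paper omits) and give a self-contained treatment of the oscillation via the irrational/rational dichotomy and the zero-sum identity, whereas the paper simply asserts the oscillation and cites an exercise in Conway.
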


\begin{proof}
In this case, \eqref{foob} is equivalent to:
\[u_{n+1}-u_n = A\alpha^{-n}+ 2\text{Re}[C\gamma^{-n}],\]
because
formula 
\eqref{coeffs} for $A,B,$ and $C$ is symmetric, 
and complex conjugation is a field automorphism.

Clearly $A\alpha^{-n}\to 0$ as $n\to\infty.$ The behavior of the other
term $2\text{Re}[C\gamma^{-n}]$ can be understood with the help of the
Euler formula \[z=|z|\exp{(i\zeta)},\] where the \emph{modulus} $|z|=\sqrt{x^2+y^2}$ of complex number $z=x+iy$ is the distance in the complex plane of $z$ from $0$, and the \emph{argument} $\zeta$ of $z$ is an angle in the complex plane 
from the positive real axis to the ray emanating from $0$ to $z$.
Complex multiplication is multiplicative in modulus and 
additive in argument.
Because $|C\gamma^{-n}|\to \infty$ as $n\to \infty,$ the desired conclusion holds if the ray emanating from $0$ to $C\gamma^{-n}$ is near the negative real
axis for some $n$ large enough that the second term $2\text{Re}[C\gamma^{-n}]$
 of the sum is dominant.

Indeed, $2\text{Re}[C\gamma^{-n}]$ oscillates in sign.
Because $|\alpha|>1,$ eventually $|A\alpha^{-n}|<0.1,$ say, while
$2\text{Re}[C\gamma^{-n}]<-0.1$ infinitely often (cf.\ exercise II.1.11 of \cite{Conway}).

\end{proof}

\begin{thm}\label{fivelter}
$5\le R.$
\end{thm}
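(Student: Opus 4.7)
The plan is to show that for every rational $r \in (4,5)$ sufficiently close to $5$, a rational $r$-cap exists. After clearing denominators to get an integer $r$-cap, Proposition~\ref{tcap} gives $r \le R$, and $5 \le R$ follows by letting $r \to 5^{-}$ through the rationals. A rational $r$-cap is the same thing as an $(r-2)$-quasicap, because the vertical gap in a quasicap closes precisely when the key box reaches height $r-2$; so the real goal is to build an $(r-2)$-quasicap.

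Starting from the trivial $1$-quasicap (one $1$-tall box above one cone), I would apply Proposition~\ref{p3} iteratively, at each step incrementing $\theta$ by an admissible $\delta>0$, until $\theta = r-2$. Admissibility reduces to verifying the chain $u_0 = u_1 \le u_2 < \cdots < u_N \ge u_{N+1}$ for the associated recurrent sequence $u_n$, and Section~\ref{rseq} supplies this in two complementary regimes. In the real-root regime $\theta \in [1,2.13]$, Proposition~\ref{realg} gives $\beta,\gamma$ real; together with Propositions~\ref{pos}, \ref{decreasing}, and \ref{last} this bounds $1/(1-\gamma) - \theta$ below by a positive constant on the compact interval (for each fixed $r<5$), so Proposition~\ref{real} delivers $u_{n+1}-u_n \to -\infty$ and hence the required $N$, for any sufficiently small admissible $\delta$. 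I would traverse $[1,2.13]$ in finitely many such steps, then use the slack $1/(1-\gamma)-\theta > 0.04$ from Proposition~\ref{last} to take one further step of size greater than $0.04$, carrying $\theta$ past $2.15$ into the complex-root regime. Once there, Propositions~\ref{imaginary}, \ref{roots}, and \ref{complex} guarantee the required $N$ for \emph{any} $\delta>0$, and a single final step of size $\delta = r - 2 - \theta$ lands at the rational cap.

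The main obstacle will be legitimizing this large final step, since Section~\ref{rseq} explicitly restricts attention to $\theta \le 2.2$ while $r-2$ is near $3$, so $\delta$ on the order of $0.8$ is required. The saving feature is that the recurrence coefficients depend only on $\theta$, not on $\delta$, so the characteristic-polynomial analysis at $\theta \in [2.15,2.2]$ remains valid irrespective of step size, and Proposition~\ref{complex} still produces some $N$ with $u_N \ge u_{N+1}$. What remains to verify carefully is the strict increase $u_2 < u_3 < \cdots < u_N$ for the chosen final $\theta$ and large $\delta = r-2-\theta$; this is essentially a single concrete sequence computation, aided by the fact that a large $u_2 = \theta + \delta$ feeds an initial growth phase before the oscillating complex-root contribution dominates and forces turnover. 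Once this check is carried out, the chain of quasicaps terminates at the rational $(r-2)$-cap, integer scaling produces the $r$-cap, and the theorem follows from Proposition~\ref{tcap} together with $r \to 5^{-}$.
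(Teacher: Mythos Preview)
Your approach is essentially the paper's own proof: fix rational $r<5$, climb from $\theta=1$ to $\theta=2.13$ in small real-root steps, jump to $\theta=2.15$, then take one complex-root step to $\theta=r-2$.

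Two small corrections. First, the step from $\theta=2.13$ should have size \emph{less than} $0.04$, not greater: Proposition~\ref{real} needs $\theta+\delta<1/(1-\gamma)$, and Proposition~\ref{last} only gives $1/(1-\gamma)-2.13>0.04$. The paper takes $\delta=0.02$ to land exactly at $\theta=2.15$, where Proposition~\ref{imaginary} is stated. Second, your ``main obstacle'' is not one. The characteristic polynomial depends only on $(r,\theta)$, as you note, so at $\theta=2.15$ Proposition~\ref{complex} applies regardless of $\delta$; its proof shows $u_{n+1}-u_n$ is negative for infinitely many $n$. Taking the \emph{first} $N\ge2$ with $u_N\ge u_{N+1}$ then gives $u_2<u_3<\cdots<u_N$ automatically (and $u_2=\theta+\delta=r-2>1=u_1$). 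No concrete sequence computation is needed.
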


\begin{proof}
Fix some rational $r$ with $4.999<r<5.$
Let $\Theta$ be the set of $\theta$ such that a $\theta$-quasicap exists.
Clearly $1\in \Theta.$
Recall that $\gamma$ depends on $r$ and $\theta.$ Though $r$ is fixed,
$\theta$ is free, so $\gamma$ is a function of $\theta.$ Let
\[F = \left\{\frac{1}{1-\gamma} - \theta \middle\vert 1 \le \theta \le 2.13\right\}.\]
Then $F$ is closed (in the usual topology on the reals) because it is the continuous image of a compact set.
Propositions~\ref{decreasing} and \ref{pos} imply $\inf F > 0.$
Fix some rational $\delta$ with $0<\delta<\inf F$ so that $(2.13-1)/\delta$
is an integer.
Propositions~\ref{roots}, \ref{realg}, and \ref{real} imply 
$u_{n+1}-u_n \to -\infty$ for all $\theta\in[1,2.13].$ 
Invoking Proposition~\ref{p3} many (i.e., $1.13/\delta$) times yields 
$2.13\in \Theta.$
Propositions~\ref{p3} and \ref{last} imply $2.15\in \Theta$ 
because $2.15-2.13 < 0.04.$ 
Propositions~\ref{p3}, \ref{imaginary}, and \ref{complex} imply $r-2\in \Theta.$
That is, a rational $r$-cap exists. Proposition~\ref{tcap} implies $5\le R.$ 
\end{proof}

For some bad pairs $(r,\theta),$ no positive $\delta$ is small
enough for Proposition~\ref{p3}. 
In the present section, 
we worked to avoid such sequences while getting as close as 
possible.
The curve $\frac{1}{1-\gamma}=\theta$ with $\delta=0$ in the $(r,\theta)$-plane
(see Figure~\ref{extreme})
is an important part of the boundary of the bad region. 
What is the least $r$ on this curve?
We aim to express $r$ as a function of $\theta$ and optimize.
The curve is defined
$\theta=\frac{1}{1-\gamma},$ so
$1 = \theta(1-\gamma).$
Now
\begin{eqnarray*}
0&=& q(\gamma)\\
&=& 1-(r-\theta)\gamma+(r-2\theta)\gamma^2+\theta \gamma^3\\
&=& 1-r(\gamma-\gamma^2)+\theta(\gamma-2\gamma^2+\gamma^3)\\
r&=& \frac{1+\theta\gamma(1-\gamma)^2}{\gamma(1-\gamma)}\\
&=& 1+\theta/\gamma\\
&=& 1+\theta(1-\theta^{-1})^{-1}\\
\frac{d r}{d \theta} &=& (1-\theta^{-1})^{-1}-\theta^{-1}(1-\theta^{-1})^{-2},
\end{eqnarray*}
and the minimum value of $r$ on the curve occurs when 
$dr/d\theta = 0, \theta = 2,$ and $r=5.$
The $r$-minimal bad sequence $u_n$ is the one with
$(r,\theta,\delta)=(5,2,0)$.
\begin{dasnote} If $(r,\theta,\delta)=(5,2,0),$ then 
$u_n$ is the Fibonacci sequence.
\end{dasnote}

\begin{figure}
\setlength{\unitlength}{1mm}
\begin{picture}(80,98)(-35,-48)
\linethickness{1pt}
\qbezier(10,20)(-10,0)(10,-20)
\put(-40,-40){\line(1,0){93}}
\put(-40,-40){\line(0,1){83}}
\multiput(-40,-40)(0,40){3}{\line(-1,0){1}}
\multiput(-40,-40)(40,0){3}{\line(0,-1){1}}

\put(-46,40){\makebox(0,0){$2.01$}}
\put(-46,0){\makebox(0,0){$2.00$}}
\put(-46,-40){\makebox(0,0){$1.99$}}
\put(-42,47){\makebox(0,0){$\theta$}}

\put(-40,-44){\makebox(0,0){$4.9999$}}
\put(  0,-44){\makebox(0,0){$5.0000$}}
\put( 40,-44){\makebox(0,0){$5.0001$}}
\put( 56,-44){\makebox(0,0){$r$}}

\put(-7,15){\makebox(0,0){$\frac{1}{1-\gamma}>\theta$}}
\put(18,15){\makebox(0,0){$\frac{1}{1-\gamma}<\theta$}}

\put(42,0){\makebox(0,0){Fibonacci sequence}}
\put(23,0){\vector(-1,0){20}}
\put(0,0){\circle*{1.6}}
\put(-40,0){\line(1,0){40}}
\put(0,-40){\line(0,1){40}}
\put(14,-11){\makebox(0,0){bad}}
\put(22,-15){\makebox(0,0){sequences}}
\end{picture}
\caption{The Fibonacci sequence $(r,\theta)=(5,2)$ is $r$-minimally bad.}
\label{extreme}
\end{figure}
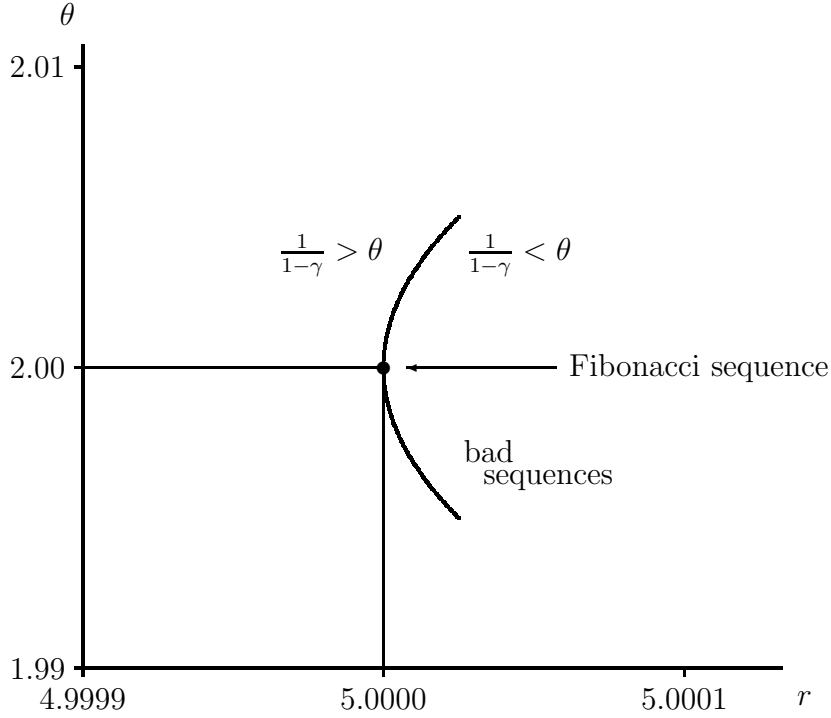

\section{No binary 5-cap exists}\label{nocap}

Caps of Section~\ref{s3} are like ordered binary trees (see Figure~\ref{f12}):
\begin{enumerate}
\item A cap has at least one box.
\item Each box but one (the \emph{top} box) supports a unique box.
\item Each box has at most 1 left supporter and at most 1 right supporter.
\item Each box is a member of the set $B$ of boxes that are either the top box or a supporter of a member of $B$.
\end{enumerate}
In case a box has both a left and right supporter, the bottom of one supporter (the \emph{high} supporter) is level 
with the top of the other (the \emph{low} supporter).
Further, in our construction:
\begin{enumerate}
\item Each high supporter of a left supporter is a left supporter.
\item Each high supporter of a right supporter is a right supporter.
\end{enumerate}
A cap of finitely many boxes that satisfies these conditions is a \emph{binary cap}.
We will show that the copy-and-paste method of Section~\ref{s3}
yields the best possible binary caps.

\psfrag{lambda}{$\lambda$}
\psfrag{H}{$H$}
\psfrag{HH}{$H^2$}
\psfrag{HHH}{$H^3$}
\psfrag{L}{$L$}
\psfrag{LH}{$LH$}
\psfrag{LHH}{$LH^2$}
\psfrag{HL}{$HL$}
\psfrag{HLH}{$HLH$}
\psfrag{LL}{$L^2$}
\begin{figure}
\begin{center}\includegraphics{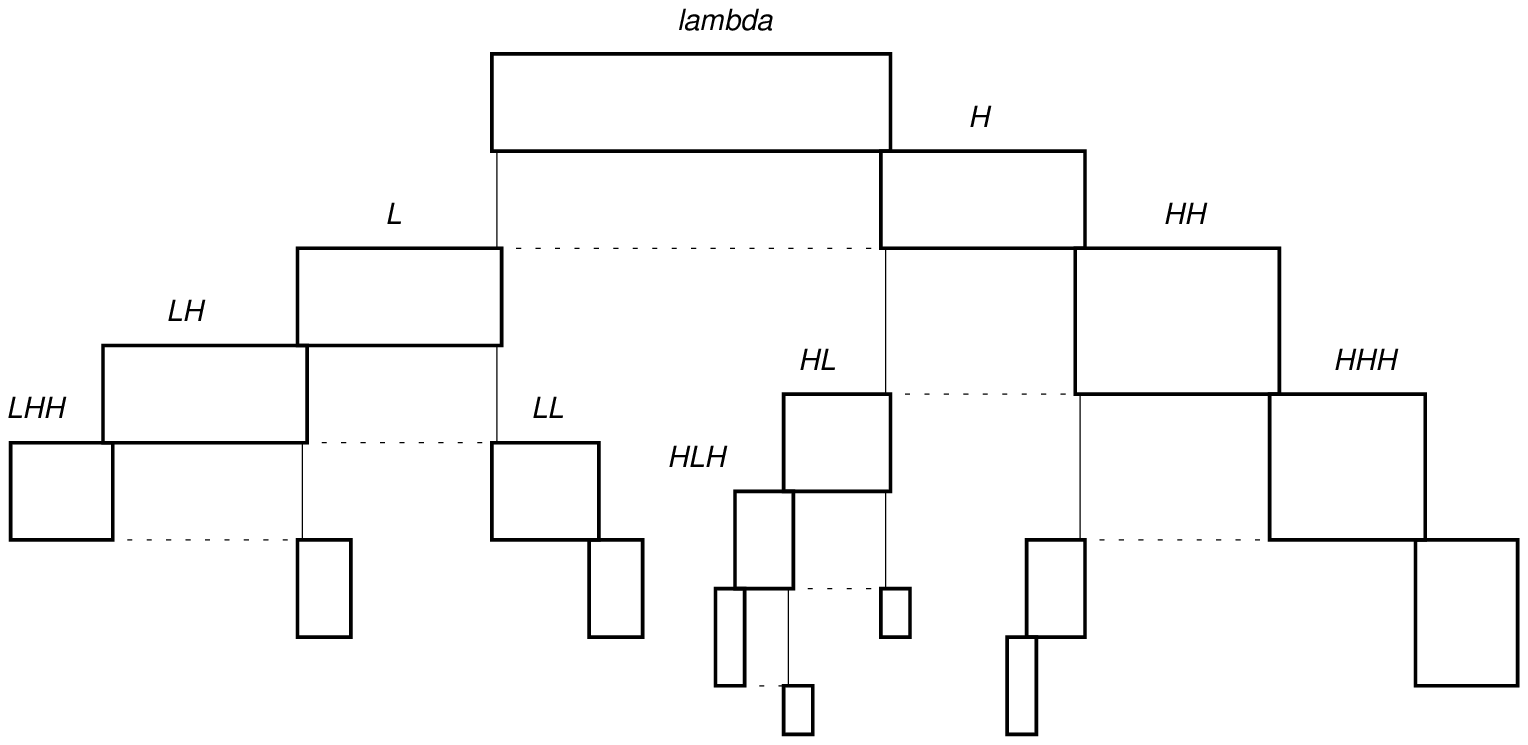}\end{center}
\caption{A binary cap }\label{f12}
\end{figure}

We name each box
of binary cap $C$ by a binary word, as in Figure~\ref{f12}.
Box $w$ may have a 
high supporter, its \emph{high child} $wH$;
and a low supporter, its \emph{low child} $wL$.
Accordingly $w$ is the \emph{parent} of $wH$ and $wL$.
Indeed, we may describe the whole binary cap $C$ by a function 
$\kappa\colon \{H,L\}^*\to \mathbb{R}^+$ where $C$ has a box
$w\in\{H,L\}^*$ if and only if $\kappa(w)>0$. 
The word of length $0$ is denoted $\lambda.$ 
We assume $\kappa(w)>0$ whenever $\kappa(wH)>0$ or $\kappa(wL)>0$.
Binary caps are nonempty, and in particular, $\kappa(\lambda)>0.$

Let $\kappa_w=\kappa(w).$
Let $\tau_w$ denote the depth of the top of the box
$w$.
Let $\beta_w$ denote the weight above the cone of box $w$ minus
the height of box $w$ itself.
Let $\alpha_w$ denote the weight above the cone of the parent of $w$.
Let $\pi_w$ (the \emph{penalty} of $w$) be $\alpha_w - \beta_w.$
Let $(f_0,f_1,\ldots)=(0,1,\ldots)$ where
$f_n=f_{n-1}+f_{n-2}$ (the Fibonacci recurrence) for $n\ge 2.$

The following hold for boxes $\lambda,v,vL,vH$ in a binary $r$-cap.

\begin{align}
0 & =\beta_{\lambda}=\pi_{\lambda}=\tau_{\lambda};\tag{Cap-\ensuremath{\lambda}}\\
\notag\\
\beta_{vL} & =\beta_{v}+\pi_{v};\tag{L-\ensuremath{\beta}}\\
\pi_{vL} & =\kappa_{v}-\pi_{v};\tag{L-\ensuremath{\pi}}\\
\kappa_{vL} & \geq r(\beta_{v}+\kappa_{v})-\tau_{vL}\geq0;\tag{L-\ensuremath{\kappa}}\\
\notag\\
\beta_{vH} & =\beta_{v};\tag{H-\ensuremath{\beta}}\\
\pi_{vH} & =\kappa_{v};\tag{H-\ensuremath{\pi}}\\
\kappa_{vH} & \geq\tau_{vL}-\tau_{v}-\kappa_{v}\geq0;\tag{H-\ensuremath{\kappa}}\\
\tau_{vH} & =\tau_{v}+\kappa_{v}.\tag{H-\ensuremath{\tau}}
\end{align}

\begin{defn}
Suppose $w=uH^{m}$ is a box in a binary cap, and set $w_{i}=uH^{i}$ for
$0 \le i \le m$.
Then $w$ is $m$-\emph{hard} if for all $i$ with $0\leq i\leq m$
each of the following holds true:
\begin{enumerate}
\item [(H0)]$\pi_{w_{i}}\geq0$;
\item [(H1)]$\kappa_{w_{i}}\geq2\pi_{w_{i}}-\pi_{u}f_{i}$; and 
\item [(H2)]$\tau_{w_{i}}\leq5\beta_{w_{i}}+2\pi_{w_{i}}+\pi_{u}f_{i+1}$. 
\end{enumerate}
\noindent A box is \emph{hard} if it is $m$-hard for some $m\in\{0,1,\dots\}$.
\end{defn}
\begin{lemma}
\label{lem:hard}If $w=uH^{m}$ is $m$-hard then $\kappa_{w}\geq\pi_{u}f_{m+3}$.
\end{lemma}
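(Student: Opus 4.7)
The plan is to prove by induction on $i$ the strengthened claim that $\kappa_{w_i}\ge \pi_u f_{i+3}$ for every $i$ with $0\le i\le m$; the lemma is then the case $i=m$. Only two pieces of the hardness hypothesis enter the argument: inequality (H1), and the identity $\pi_{w_i}=\kappa_{w_{i-1}}$ (valid for $i\ge 1$) that comes from applying (H-$\pi$) to $w_i=w_{i-1}H$. Conditions (H0) and (H2) play no role here; presumably they are being carried along so that hardness can be propagated through subsequent lemmas rather than to power this particular bound.

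For the base case $i=0$, condition (H1) at $w_0=u$ reads $\kappa_u\ge 2\pi_u-\pi_u f_0$; since $f_0=0$ and $f_3=2$, this is exactly $\kappa_u\ge \pi_u f_3$. For the inductive step, substituting $\pi_{w_i}=\kappa_{w_{i-1}}$ into (H1) produces the linear inequality
\[
\kappa_{w_i}\ge 2\kappa_{w_{i-1}}-\pi_u f_i.
\]
Applying the inductive hypothesis $\kappa_{w_{i-1}}\ge \pi_u f_{i+2}$ yields $\kappa_{w_i}\ge \pi_u(2f_{i+2}-f_i)$. A one-line Fibonacci manipulation closes the induction: from $f_{i+2}=f_{i+1}+f_i$ one reads off $f_{i+2}-f_i=f_{i+1}$, hence
\[
2f_{i+2}-f_i=f_{i+2}+f_{i+1}=f_{i+3},
\]
so $\kappa_{w_i}\ge \pi_u f_{i+3}$, and setting $i=m$ gives the lemma.

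There is no real obstacle: the whole argument is a three-line Fibonacci induction driven by (H1) together with the automatic relation $\pi_{vH}=\kappa_v$. The only point demanding care is the index bookkeeping (the shift by $3$ in the Fibonacci subscript), and the clean base-case identity $f_3=2$ coming out of $\kappa_u\ge 2\pi_u$ serves as a reassuring check that the shift is calibrated correctly.
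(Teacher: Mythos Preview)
Your proof is correct and follows essentially the same route as the paper: an induction driven by (H1), the identity $\pi_{w_i}=\kappa_{w_{i-1}}$ from (H-$\pi$), and the Fibonacci manipulation $2f_{i+2}-f_i=f_{i+3}$. The only cosmetic difference is that the paper phrases the induction as ``$uH^{m-1}$ is $(m-1)$-hard'' rather than explicitly stating the strengthened claim for all $w_i$, but the computations are identical.
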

\begin{proof}
Argue by induction on $m$. If $m=0$ then $\kappa_{w}\geq2\pi_{w}-\pi_{u}f_{0}=\pi_{u}f_{3}$.
Else $m>0$. Then $w'=uH^{m-1}$ is $(m-1)$-hard. By (H1), (H-$\pi$)
and induction, we have:
\[
\kappa_{w}\geq2\pi_{w}-\pi_{u}f_{m}=2\kappa_{w'}-\pi_{u}f_{m}\geq2\pi_{u}f_{m+2}-\pi_{u}f_{m}=\pi_{u}(f_{m+2}+f_{m+1})=\pi_{u}f_{m+3}.
\]
\end{proof}
\begin{thm}
\noindent There does not exist a binary $5$-cap.\end{thm}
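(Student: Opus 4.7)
The plan is to suppose for contradiction that a binary $5$-cap $C$ exists and to derive a contradiction from the Fibonacci growth forced by Lemma~\ref{lem:hard}.

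First I would pick a base box $u$ with $\pi_u > 0$ that is $0$-hard. Since $\kappa_\lambda > 0$ and $\lambda$'s cone sits at depth at least $5\kappa_\lambda$ by sparseness, the supporter(s) of $\lambda$ must jointly cover depth at least $4\kappa_\lambda$, so either $\kappa_{\lambda H} \geq 2\kappa_\lambda$ or $\kappa_{\lambda L} \geq 2\kappa_\lambda$. Taking $u$ to be the corresponding child of $\lambda$ and using the transition identities at $\lambda$ together with $\beta_\lambda = \pi_\lambda = \tau_\lambda = 0$, I would verify (H0), (H1), (H2) directly at $u$.

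Second, I would establish the propagation step: if $w = uH^m$ is $m$-hard and $wH$ is a box of $C$, then $wH$ is $(m+1)$-hard. The conditions at $i \leq m$ are inherited, so only $i = m+1$ need be checked. Applying (H-$\beta$), (H-$\pi$), (H-$\tau$), one finds that (H0) reduces to $\kappa_w \geq 0$, and that (H2) at $wH$ is equivalent to (H1) at $w$ via the Fibonacci identity $f_{m+2} = f_{m+1} + f_m$. The remaining condition (H1) at $wH$, namely $\kappa_{wH} \geq 2\kappa_w - \pi_u f_{m+1}$, carries the main content: I would derive it by combining (H-$\kappa$) and (L-$\kappa$) (or sparseness of $w$'s cone when $wL$ is absent) with (H1) and (H2) at $w$. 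The arithmetic closes precisely because $r = 5$, consistent with Section~\ref{rseq}'s identification of $(r,\theta)=(5,2)$ as $r$-minimally bad. This (H1) step is the main obstacle, requiring delicate balancing of sparseness, support, and the Fibonacci recurrence, together with case analysis on whether $w$ has one or two children.

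With base case and propagation in hand, Lemma~\ref{lem:hard} gives $\kappa_{uH^m} \geq \pi_u f_{m+3}$ whenever the chain $u, uH, uH^2, \dots$ reaches $uH^m$. Since $C$ is finite, this chain terminates at some largest $M$ with $uH^M$ a box of $C$, and $uH^M$ either has no children or has only an $L$-child. In the leaf case, the cone of $uH^M$ sits immediately below it, so sparseness gives $\tau_{uH^M} \geq 5\beta_{uH^M} + 4\kappa_{uH^M}$; substituting into (H2) at $uH^M$ and then (H1) at $uH^M$ forces $\kappa_{uH^M} \leq \pi_u f_{M+2}/3$, which contradicts the Fibonacci lower bound since $3f_{M+3} > f_{M+2}$. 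In the $L$-child case, I would show that the $L$-child $uH^M L$ is itself a $0$-hard base with strictly larger penalty than $u$, so the whole argument may be restarted there; since $C$ has only finitely many boxes and each restart descends strictly deeper in the cap, the restart cannot recur forever, and the process eventually terminates at a leaf, producing the same contradiction.
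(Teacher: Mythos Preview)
Your propagation step contains a genuine gap. You claim that if $w=uH^m$ is $m$-hard and $wH$ is a box, then $wH$ is $(m+1)$-hard; but the derivation of (H1) at $wH$ does not go through from (H-$\kappa$), (L-$\kappa$), (H1), and (H2) alone. Concretely, (H-$\kappa$) gives only $\kappa_{wH}\ge \tau_{wL}-\tau_w-\kappa_w$, so to obtain $\kappa_{wH}\ge 2\kappa_w-\pi_u f_{m+1}$ you would need $\tau_{wL}\ge \tau_w+3\kappa_w-\pi_u f_{m+1}$, and in particular (via (H2) at $w$) $\tau_{wL}\ge 5\beta_w+3\kappa_w+2\pi_w$. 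Nothing in your hypotheses forces this: the split point $\tau_{wL}$ between the high and low supporters of $w$ can sit anywhere between $\tau_w+\kappa_w$ and the cone depth of $w$, and adding (L-$\kappa$) does not help since it bounds $\kappa_{wL}$ from below, not above. So even when $wH$ exists with positive height, it need not be $(m+1)$-hard.

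The paper's proof repairs exactly this by a dichotomy at every hard box $w$: if $\tau_{wL}\ge 5\beta_w+3\kappa_w+2\pi_w$ then $wH$ is $(m+1)$-hard; otherwise $wL$ is $0$-hard (with new base $wL$), and in either case some strictly longer hard word exists. Thus the ``restart at an $L$-child'' can be forced \emph{in the middle} of your $H$-chain, not only at its terminus; your control flow, which runs the $H$-chain to its end before ever considering a restart, misses this case. The paper then packages the whole argument as ``take a hard box of maximum word length and exhibit a longer one,'' which sidesteps the explicit chaining and restarting entirely and handles the $\pi_u=0$ base (namely $u=\lambda$) without the extra work you do to secure $\pi_u>0$.
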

\begin{proof}
\noindent Suppose $\kappa$ is a binary $5$-cap.  The top box $\lambda$
is $0$-hard as $\kappa_{\lambda}>0$ and $\pi_{\lambda}=\tau_{\lambda}=0$.
Since caps are finite, there is a hard box $w$ with maximum word
length. For a contradiction, we show that there exists $w^{+}\in\{wL,wH\}$
such that $w^{+}$ is hard and $\kappa_{w^{+}}>0$ (so $w^{+}$ is
in $\kappa$). 

Let $w=uH^{m}$ be $m$-hard. If $\tau_{wL}<5\beta_{w}+3\kappa_{w}+2\pi_{w}$
then $wL$ is $0$-hard, since

\begin{align*}
\pi_{wL} & =\kappa_{w}-\pi_{w}\geq2\pi_{w}-\pi_{w}\geq0 &  & \mbox{((L-\ensuremath{\pi}), (H1), (H0))}\\
\intertext{and}\kappa_{wL} & \ge5\beta_{w}+5\kappa_{w}-\tau_{wL} &  & \text{((L-\ensuremath{\kappa}))}\\
 & >5\beta_{w}+5\kappa_{w}-(5\beta_{w}+3\kappa_{w}+2\pi_{w}) &  & \text{(case)}\\
 & =2\kappa_{w}-2\pi_{w}\\
 & =2\pi_{wL}-\pi_{wL}f_{0} &  & \text{((L-\ensuremath{\pi}), Fibonacci)}\\
\intertext{and}\tau_{wL} & <5\beta_{w}+3\kappa_{w}+2\pi_{w}+3(\pi_{w}-\pi_{w}) &  & \text{(case)}\\
 & =5\beta_{wL}+2\pi_{wL}+\pi_{wL}f_{1}. &  & \text{((L-\ensuremath{\beta}), (L-\ensuremath{\pi}), Fibonacci)}
\end{align*}
Moreover, $\kappa_{wL}>2\pi_{wL}-\pi_{wL}f_{0}=2\pi_{wL}\geq0$, since
in this case (H1) is strict. 

Otherwise $\tau_{wL}\geq5\beta_{w}+3\kappa_{w}+2\pi_{w}$. Then $wH=uH^{m+1}$
is $(m+1)$-hard, since 
\begin{align*}
\pi_{w} & =\kappa_{uH^{m}}\geq0 &  & \mbox{((H-\ensuremath{\pi}))}\\
\intertext{and}\kappa_{wH} & \ge\tau_{wL}-\tau_{w}-\kappa_{w} &  & \text{((H-\ensuremath{\kappa}))}\\
 & \geq(5\beta_{w}+3\kappa_{w}+2\pi_{w})-(5\beta_{w}+2\pi_{w}+\pi_{u}f_{m+1})-\kappa_{w} &  & \text{(case, (H2) for \ensuremath{w})}\\
 & =2\kappa_{w}-\pi_{u}f_{m+1}\\
 & =2\pi_{wH}-\pi_{u}f_{m+1} &  & \text{((H-\ensuremath{\pi}))}\\
\intertext{and}\tau_{wH} & =\tau_{w}+\kappa_{w} &  & \text{((H-\ensuremath{\tau}))}\\
 & \leq5\beta_{w}+2\pi_{w}+\pi_{u}f_{m+1}+\kappa_{w} &  & \text{((H2) for \ensuremath{w})}\\
 & \le5\beta_{wH}+\kappa_{w}+(2\pi_{w}-\pi_{u}f_{m})+\pi_{u}f_{m+2} &  & \text{((H-\ensuremath{\beta}), Fibonacci)}\\
 & \leq5\beta_{wH}+2\kappa_{w}+\pi_{u}f_{m+2} &  & \mbox{((H1) for \ensuremath{w})}\\
 & \leq5\beta_{wH}+2\pi_{wH}+\pi_{u}f_{m+2}. &  & \text{((H-\ensuremath{\pi}))}
\end{align*}
If $\pi_{u}>0$ then $\kappa_{wH}\geq\pi_{u}f_{m+4}>0$ by Lemma~\ref{lem:hard}.
If $\pi_{u}=0$ then $\kappa_{wH}\geq2\pi_{wH}=2\kappa_{w}>0$.\end{proof}

\end{document}